\numberwithin{equation}{section}
\theoremstyle{plain}
\newtheorem{Th}{Theorem}[section]
\newtheorem{Lemma}[Th]{Lemma}
\newtheorem{Cor}[Th]{Corollary}
\newtheorem{Prop}[Th]{Proposition}
 \theoremstyle{definition}
\newtheorem{Def}[Th]{Definition}
\newtheorem{Rem}[Th]{Remark}
\newtheorem{?}[Th]{Problem}
\begin{document}

\title[Estimates for quasiintegral points in semigroup orbits]
{ On Quantitative estimates for quasiintegral points in orbits of semigroups of rational maps}

\author[Jorge Mello]{Jorge Mello}

\address{University of New South Wales. mailing adress:\newline School of Mathematics and Statistics
UNSW Sydney 
NSW, 2052
Australia.} 

\email{j.mello@unsw.edu.au}

 \subjclass[2010]{Primary}

 \keywords{Integer points on orbits}

\begin{abstract} We give quantitative bounds for the number of quasi-integral points in orbits of semigroups of rational maps under some conditions, generalizing previous work of L. C. Hsia and J. Silverman (2011)
for orbits generated by the iterations of one rational map.
\end{abstract}

\maketitle

\section{Introduction} Let $K$ be a number field, $S$ a finite set of places of $K$, and $\varepsilon >0$.
An element $x \in K$ is said to be \textit{quasi-$(S,\varepsilon)$-integral} if 
\begin{center}
 $\displaystyle\sum_{v \in S} \dfrac{[K_v: \mathbb{Q}_v]}{[K:\mathbb{Q}]} \log (\max \{|x|_v, 1 \}) \geq \varepsilon h([x,1])$,
 \end{center} where $h$ is the  absolute logarithmic height in $\mathbb{P}^1(\overline{\mathbb{Q}})$ and $[x,1] \in \mathbb{P}^1(\overline{\mathbb{Q}})$.
 
 Let $\mathcal{F}= \{ \phi_1,..., \phi_k \} \subset K(x)$ be a finite set of rational functions of degree at least $2$, let $P \in K$ and let 
\begin{center}
$\mathcal{O}_{\mathcal{F}}(P)= \{ \phi_{i_n} \circ ... \circ \phi_{i_1}(P) | n \in \mathbb{N} , i_j =1,...,k \}$
\end{center}
denote the forward orbit of $P$ under the semigroup of functions generated by $\mathcal{F}$.
When $k=1$ and $\phi_1^2= \phi_1 \circ \phi_1 \notin k[z]$, Hsia and Silverman proved [3] that the number of quasi-$(S, \varepsilon)$-integral points in the orbit of a point $P$ with infinite orbit is bounded by a constant depending only on $\phi_1, \hat{h}_{\phi_1}(P), \epsilon, S$, and $[K:\mathbb{Q}]$ ( see Section 2 for the correspondent definitions). We also note that these results, according to (Remark 1, [3]), have some applications as  the existence of quantitative estimates for the size of Zsigmondy sets for such orbits and their primitive divisors, as well for quantitative versions of a dynamical local-global principle in orbits on the projective line. This research was also used to prove finiteness of multiplicatively dependent iterated values by rational functions in [1]. 

In this present paper we generalize this bound for cases of dynamical systems with several rational functions, obtaining, among other results, the following integrality
 result for orbits, that recovers Theorem 2.1 of [7] of J. Silverman using a different approach and under different hypothesis.
 
 \textbf{Theorem A} \textit{  Let $\mathcal{F}= \{ \phi_1,..., \phi_k \} \subset K(x)$ be a set of rational functions of respective degrees $2 \leq d_1\leq ...\leq d_k$ that are not totally ramified at the $\mathcal{F}$-orbit of $\infty$ or that the $\mathcal{F}$-orbit of $\infty$ has no repeated points. Then there is a constant
 $\gamma= \gamma(S,\mathcal{F}, [K:\mathbb{Q}])$ such that for all points $P \in \mathbb{P}^1(K)$ that are not $\Phi$-preperiodic for any sequence $\Phi$ of terms in $\mathcal{F}$,
 the number of $S$-integers in the $\mathcal{F}-$orbit of $P$ is bounded by} 
 \begin{center}
  $\# \{Q \in \mathcal{O}_{\mathcal{F}}(P) ; x(Q) \in R_S \} \leq \dfrac{k^M-1}{k-1}$,  \end{center}
  where $x(Q)$ is the $x$-coordinate of $Q$ and \begin{center} $M=\left\lceil \gamma + \log^+_{d_1}\left(\dfrac{h(\mathcal{F})}{\displaystyle\inf_{\Phi}\hat{h}_{\Phi}(P)}\right)\right\rceil+1$. \end{center}

In Sections 2 and 3 we remind important facts about height functions, distance and dynamics on the projective line. In section 4 we state a quantitative version of Roth's theorem and some facts about the index of ramification. The main results are proved in Section 5, namely, Theorem 5.2 and its Corollaries. 
\section{Canonical Heights} We always assume that $K$ is a fixed number field and $K(z)$ is the field of rational functions over $K$ for the rest of the paper. We identify $K \cup \{ \infty \} = \mathbb{P}^1(K)$ by fixing an affine coordinate $z$ on $\mathbb{P}^1$, so $\alpha \in K$ is equal to $[\alpha, 1] \in \mathbb{P}^1(K)$, and the point at infinity is $[1,0]$. In this way, we assume $z$ is the first left coordinate for points in $\mathbb{P}^1$,
and with respect to this affine coordinate, we identify rational self-maps of $\mathbb{P}^1$ with rational functions in $K(z)$. 
 
 If $P= [x_0,...,x_N] \in \mathbb{P}^N(K)$, the naive logarithmic height is given by 
\begin{center}$h(P)= \sum_{v \in M_K} \dfrac{[K_v: \mathbb{Q}_v]}{[K:\mathbb{Q}]} \log(\max_i |x_i|_v)$, \end{center}
where $M_K$ is the set of places of $K$, $M_K^\infty$ is the set of archimedean (infinite) places of $K$, $M_K^0$ is the set of nonarchimedean (finite) places of $K$, and for each $v \in M_K$, $|.|_v$ denotes the corresponding absolute value on $K$ whose restriction to $\mathbb{Q}$ gives the usual $v$-adic absolute value on $\mathbb{Q}$.
Also, we write $K_v$ for the completion of $K$ with respect to $|.|_v$, and we let $\mathbb{C}_v$ denote the completion of an algebraic closure of $K_v$. To simplify notation, we let $d_v=[K_v:\mathbb{Q}_v]/[K:\mathbb{Q}]$.
Initially, let us recall some theorems on height functions.

\begin{Lemma} \label{main} (Theorem 1.1.1, [5]) There is a way to attach to any projective variety $X$ over $\bar{\mathbb{Q}}$ and any line bundle $L$ on $X$ a function 
\begin{center}
 $h_{L}: X(\bar{\mathbb{Q}}) \rightarrow \mathbb{R}$
 \end{center} with the following properties: 
 
 (i) $h_{L\otimes M}= h_{L} + h_{M} + O(1)$ for any line bundles $L$ and $M$ on $X$, where $O(1)$ is a bounded function for $P$ in $X(\bar{\mathbb{Q}})$.

(ii) If $X= \mathbb{P}^N$ and $L=\mathcal{O}_{\mathbb{P}^N}(1)$, then $h_{\mathcal{O}_{\mathbb{P}^N}(1)}= h + O(1)$.

(iii) If $f: Y \rightarrow X$ is a morphism of projective varieties and $L$ is a line bundle on 

$X$, then $h_{f^*L}=h_{L} \circ f + O(1)$. \newline 
Moreover, the height functions $h_{L}$ are determined up to $O(1)$ by the above three properties.
\end{Lemma}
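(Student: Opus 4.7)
The plan is to construct the Weil Height Machine in two stages: first define $h_L$ for very ample bundles using projective embeddings, then extend to arbitrary line bundles by expressing them as differences of very ample ones. For a very ample $L$, I would choose a basis $s_0, \ldots, s_N$ of $H^0(X,L)$, obtain the associated closed immersion $\phi_L : X \to \mathbb{P}^N$, and set $h_L := h \circ \phi_L$. Since two bases differ by an element of $\mathrm{GL}_{N+1}$ and the naive height on $\mathbb{P}^N$ is invariant up to $O(1)$ under linear change of coordinates (bounded by comparing the operator norms of the matrix and its inverse), the function $h_L$ is well defined modulo $O(1)$.

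For an arbitrary line bundle $L$, I would invoke Serre's theorem that $L \otimes A^{\otimes n}$ is very ample for any ample $A$ and $n$ sufficiently large, so I can write $L \cong L_1 \otimes L_2^{-1}$ with $L_1, L_2$ very ample, and define $h_L := h_{L_1} - h_{L_2}$. Independence of this decomposition reduces to proving additivity $h_{M \otimes N} = h_M + h_N + O(1)$ for very ample $M, N$. This is the crucial computational step: the composition $X \xrightarrow{(\phi_M, \phi_N)} \mathbb{P}^m \times \mathbb{P}^n \hookrightarrow \mathbb{P}^{(m+1)(n+1)-1}$ via the Segre embedding realizes $\phi_{M \otimes N}$ up to linear change of coordinates, and the place-by-place identity $\max_{i,j}|x_i y_j|_v = (\max_i |x_i|_v)(\max_j |y_j|_v)$ yields the additive relation on heights.

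Once additivity is in place, the three axioms follow formally. Property (ii) is the very definition of $h_L$ for $L = \mathcal{O}_{\mathbb{P}^N}(1)$ with its standard basis. Property (i) for arbitrary $L, M$ follows by bilinearity from the very ample case together with the difference decomposition. For property (iii), when $L$ is very ample and $f^*L$ is base-point-free, a pulled-back basis gives $\phi_{f^*L} = \phi_L \circ f$, hence $h_{f^*L} = h_L \circ f + O(1)$; the general case reduces to this by writing both $L$ and $f^*L$ as differences of very ample bundles, twisting by an auxiliary very ample bundle so that all relevant pullbacks become very ample.

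For uniqueness, let $\delta_L := h_L - h_L'$ be the difference of two such machines. Properties (i), (ii), (iii) imply that $\delta$ is additive in $L$, vanishes on $\mathcal{O}_{\mathbb{P}^N}(1)$, and is functorial under pullback, all up to $O(1)$. For very ample $L$ the computation $\delta_L = \delta_{\phi_L^* \mathcal{O}(1)} = \delta_{\mathcal{O}(1)} \circ \phi_L + O(1) = O(1)$ settles the very ample case, and the decomposition $L = L_1 \otimes L_2^{-1}$ then extends this to all line bundles. The main obstacle is concentrated in the Segre additivity step for very ample bundles, since this single computational input unlocks both the coherence of the construction and the uniqueness argument; a secondary subtlety is dealing with morphisms $f$ for which $f^*L$ need not itself be very ample, handled by twisting with auxiliary ample bundles to reduce to the base-point-free case.
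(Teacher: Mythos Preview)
Your outline is the standard construction of the Weil Height Machine and is correct as written. Note, however, that the paper does not give its own proof of this statement at all: it is simply quoted as Theorem~1.1.1 of reference~[5] (Kawaguchi), so there is nothing in the paper to compare your argument against. Your sketch follows the classical route found in the standard references (e.g.\ Lang, Hindry--Silverman), building $h_L$ first for very ample $L$ via an embedding, extending by differences of very ample bundles, and deducing well-definedness and properties (i)--(iii) from Segre additivity; this is presumably also what underlies the cited result.
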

Recalling that a line bundle in called \textit{ very ample} if it has enough global sections to set up an embedding of the variety into some projective space,
and that is called \textit{ample} if one of its positive powers is very ample, we have 
\begin{Lemma}(Theorem 1.1.2, [5]) Assume $L$ is an ample line bundle of $X$. Let $h_{X,L}$ be a height function corresponding to $L$.

(1) (Northcott's finiteness property) For any real number $c$ and positive integer $D$, the set 
\begin{center}
 $\{x \in X(\bar{\mathbb{Q}}) | [\mathbb{Q}(x): \mathbb{Q}] \leq D, h_{L}(x)\leq c\}$ 
\end{center} is finite.

(2) (positivity) There is a constant $c^{\prime}$ such that $h_{L}(x) \geq c^{\prime}$ for all $x \in X(\bar{\mathbb{Q}})$.

\end{Lemma}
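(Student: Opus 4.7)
The plan is to reduce both statements to the classical case $X = \mathbb{P}^N$ with $L = \mathcal{O}(1)$, exploiting the hypothesis that $L$ is ample together with the functoriality properties of height functions listed in Lemma~\ref{main}. Since $L$ is ample, some positive power $L^{\otimes n}$ is very ample, so its global sections define a closed embedding $i \colon X \hookrightarrow \mathbb{P}^N$ with $i^{*}\mathcal{O}_{\mathbb{P}^N}(1) \cong L^{\otimes n}$. Combining properties (i), (ii), (iii) of Lemma~\ref{main} then yields
\[
  n\, h_{L}(x) \;=\; h_{L^{\otimes n}}(x) + O(1) \;=\; h_{\mathcal{O}_{\mathbb{P}^N}(1)}(i(x)) + O(1) \;=\; h(i(x)) + O(1),
\]
for every $x \in X(\overline{\mathbb{Q}})$; this identity will carry both parts.

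For part (2), I would use the fact that the naive height on $\mathbb{P}^N(\overline{\mathbb{Q}})$ is nonnegative: rescaling a projective representative so that one coordinate equals $1$ (which is allowed by the product formula) forces $\max_i |x_i|_v \geq 1$ at every place $v$, so every local term in the defining sum is $\geq 0$. Thus $h(i(x)) \geq 0$ uniformly in $x$, and the displayed identity gives $h_{L}(x) \geq -C/n =: c'$, where $C$ is the implicit constant in the $O(1)$ term.

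For part (1), suppose $x \in X(\overline{\mathbb{Q}})$ satisfies $[\mathbb{Q}(x):\mathbb{Q}] \leq D$ and $h_{L}(x) \leq c$. After enlarging the base field, I may assume the embedding $i$ is defined over a fixed number field $K$; then $\mathbb{Q}(i(x)) \subseteq K \cdot \mathbb{Q}(x)$, so $[\mathbb{Q}(i(x)):\mathbb{Q}] \leq [K:\mathbb{Q}] \cdot D$, and the displayed identity bounds $h(i(x))$ by a constant $c''$ depending only on $c$, $n$, and $C$. Since $i$ is injective, it suffices to prove finiteness for the set of $\mathbb{P}^N$-points of bounded degree and naive height. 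Reducing further coordinate by coordinate, one is left with the classical fact that there are only finitely many algebraic numbers of bounded degree and bounded height; this follows because the coefficients of the minimal polynomial of such an $\alpha$ over $\mathbb{Z}$ are controlled by $h(\alpha)$ and $\deg \alpha$ through standard Mahler-measure / symmetric-function inequalities, leaving only finitely many candidate polynomials.

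The difficulty is not conceptual but bookkeeping: one must track how the degree $[\mathbb{Q}(x):\mathbb{Q}]$ transforms under $i$ and how the $O(1)$ constants produced by Lemma~\ref{main} propagate through the identity above, and one must invoke the classical elementary Northcott theorem for $\mathbb{P}^N$. Each step is standard, so the main obstacle is essentially ensuring that the reduction to the projective-space case preserves the degree bound on $x$, which is handled by fixing a field of definition for the embedding $i$ and absorbing the resulting multiplicative factor into $D$.
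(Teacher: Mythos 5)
Your argument is correct and is the standard proof of this result: the paper itself states this lemma as a quoted fact from Kawaguchi's work (Theorem 1.1.2 of [5]) without reproving it, and your reduction via a very ample power of $L$ to the naive height on $\mathbb{P}^N$, followed by the classical Northcott theorem and the nonnegativity of the naive height, is exactly the argument given in the standard references. The one point worth keeping explicit, as you do, is that the embedding $i$ is defined over a fixed number field so that the degree bound on $x$ only changes by the bounded factor $[K:\mathbb{Q}]$.
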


Given a projective variety $X$ over a number field $K$ and $L$ a line bundle on $X$, a height function $h_{X,L}$ corresponding to $L$ is fixed. Let $\mathcal{H}$ be a set of morphisms $f:X \rightarrow X$
over $K$ such that $f^*L \cong L^{\otimes d_f}$ for some integer $d_f \geq 2$. For $f \in \mathcal{H}$, we set
\begin{center}
 $c(f):= \sup_{x \in X(\bar{K})} \left|\dfrac{1}{d_f}h_{L}(f(x)) - h_{L}(x)\right|$.
\end{center}For $\bf{f}$ $= (f_i)_{i=1}^\infty$ a sequence with $f_i \in \mathcal{H}$, i.e, $\bf{f}$ $\in \prod_{i=1}^\infty \mathcal{H}$, we set 
\begin{center}
 $c(\bf{f})$ $:= \sup_{i \geq1} c(f_i) \in \mathbb{R} \cup \{+ \infty \}$.
\end{center}
When $c(\bf{f})$ $ < +\infty$, the sequence is said to be \textit{bounded}. The property of being bounded is independent of the choice of height functions corresponding to $L$.

Let $\mathcal{B}$ be the set of all bounded sequences in $\mathcal{H}$, and for $c>0$, we define
\begin{center}
$\mathcal{B}_c := \{ \bf{f}$ $=(f_i)_{i=1}^\infty \in \mathcal{B} | c(\bf{f})$ $\leq c\}$. 
\end{center}
It is easy to see that if $\mathcal{H}$
 is a finite set of self-maps on a projective space, then any sequence of maps arising from $\mathcal{H}$ belongs to $\mathcal{B}_c$ for some $c$. 
 
 In fact, for $\mathcal{H}=\{ g_1,...g_k \}$, we set 
 \begin{equation} J=\{1,...k\}, W:= \prod_{i=1}^\infty J,
 \text{and}\ \textbf{f}_w := (g_{w_i})_{i=1}^\infty \ \text{for}\ w=(w_i) \in W. \end{equation}
 If $c := \max \{ c(g_1),..., c(g_k) \}$, then $\{\textbf{f}_w$ $| w \in W \} \subset \mathcal{B}_c$.
 
 We also let $S:\prod_{i=1}^\infty \mathcal{H} \rightarrow \prod_{i=1}^\infty \mathcal{H}$ be the \textit{shift map} which sends $\bf{f}$ $=(f_i)_{i=1}^\infty$ to \begin{center}$S(\bf{f})$ $=(f_{i+1})_{i=1}^\infty$.\end{center} 
Then $S$ maps $\mathcal{B}$ into $\mathcal{B}$ and $\mathcal{B}_c$ into $\mathcal{B}_c$ for any $c$.

For $\bf{f}$ $= (f_i)_{i=1}^\infty \in \prod_{i=1}^\infty \mathcal{H}$ and $x \in X(\bar{K})$,
denoting $\bf{f}^{(n)}$ $:=f_n(f_{n-1}(...(f_1(x)))$, the set \begin{center} $\{ x, \textbf{f}^{(1)}(x),  \textbf{f}^{(2)}(x),  \textbf{f}^{(3)}(x),... \}$ 
$ =\{ x, f_1(x), f_2(f_1(x)), f_3(f_2(f_1(x))),... \}$ \end{center} is called the \textit{forward orbit} of $x$ under $\bf{f}$, denoted by
$\mathcal{O}_{\bf{f}} (x)$. The point $x$ is said to be $\bf{f}$-\textit{preperiodic} if $\mathcal{O}_{\bf{f}} (x)$ is finite. If $f=f_1=f_2=....,$ then the forward orbit is the forward orbit under $f$ in the usual sense.

\begin{Lemma} (Theorem 2.3, [4]) Let $X$ be a projective variety over $K$, and $L$ a line bundle on $X$. Let $h_L$ be a height function corresponding to $L$.

(1) There is a unique way to attach to each bounded sequence $\bf{f}$ $=(f_i)_{i=1}^\infty \in \mathcal{B}$ a canonical height function
\begin{center}
 $\hat{h}_{ \bf{f}} : X(\bar{K}) \rightarrow \mathbb{R}$
\end{center} such that

(i) $\sup_{x \in X(\bar{K})} |\hat{h}_{ \bf{f}}(x) - h_L(x)|\leq 2 c(\bf{f})$.

(ii) $\hat{h}_{ S (\bf{f})} \circ f_1= d_{f_1} \hat{h}_{ \bf{f}}$. In particular, $\hat{h}_{ S^n (\bf{f})} \circ f_n \circ ...\circ  f_1=d_{f_n}...d_{f_1} \hat{h}_{L, \bf{f}}$.

 (2) Assume $L$ is ample. Then $\hat{h}_{ \bf{f}}$ satisfies the following properties:

(iii) $\hat{h}_{ \bf{f}}(x) \geq 0$ for all $x \in X(\bar{K})$.

(iv) $\hat{h}_{ \bf{f}}(x)=0$ if and only if $x$ is $\bf{f}$-preperiodic.
\newline We call $\hat{h}_{ \bf{f}}$ a canonical height function (normalized) for $\bf{f}$.
\end{Lemma}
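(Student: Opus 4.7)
The plan is the standard Tate limit, adapted to the non-autonomous (sequential) setting. With $D_n := d_{f_1} \cdots d_{f_n}$ and $F_n := f_n \circ \cdots \circ f_1$ (and $D_0 = 1$, $F_0 = \mathrm{id}$), I would define
\begin{equation*}
\hat{h}_{\mathbf{f}}(x) := \lim_{n \to \infty} \frac{h_L(F_n(x))}{D_n}.
\end{equation*}
The definition of $c(f_n)$, combined with $D_{n-1} \geq 2^{n-1}$, yields
\begin{equation*}
\left| \frac{h_L(F_n(x))}{D_n} - \frac{h_L(F_{n-1}(x))}{D_{n-1}} \right| \leq \frac{c(f_n)}{D_{n-1}} \leq \frac{c(\mathbf{f})}{2^{n-1}},
\end{equation*}
so the sequence is Cauchy and converges, and telescoping from $n = 0$ gives property (i): $|\hat{h}_{\mathbf{f}}(x) - h_L(x)| \leq 2c(\mathbf{f})$.

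For the shift relation (ii), the identities $F_n^{\mathbf{f}} = F_{n-1}^{S(\mathbf{f})} \circ f_1$ and $D_n^{\mathbf{f}} = d_{f_1} D_{n-1}^{S(\mathbf{f})}$ allow direct substitution into the defining limit to get $\hat{h}_{S(\mathbf{f})}(f_1(x)) = d_{f_1}\hat{h}_{\mathbf{f}}(x)$; induction then gives the iterated form. For uniqueness, if another assignment $\hat{h}'$ also satisfies (i) and (ii) for every bounded sequence, then for each bounded $\mathbf{g}$ the difference $g_{\mathbf{g}} := \hat{h}_{\mathbf{g}} - \hat{h}'_{\mathbf{g}}$ is bounded by $4c(\mathbf{g})$ and satisfies $g_{S(\mathbf{f})} \circ f_1 = d_{f_1} g_{\mathbf{f}}$. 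Iterating gives
\begin{equation*}
|g_{\mathbf{f}}(x)| = D_n^{-1}\bigl|g_{S^n(\mathbf{f})}(F_n(x))\bigr| \leq \frac{4c(\mathbf{f})}{D_n},
\end{equation*}
where I use the visible fact $c(S^n(\mathbf{f})) \leq c(\mathbf{f})$; letting $n \to \infty$ forces $g_{\mathbf{f}} \equiv 0$.

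When $L$ is ample, (iii) follows immediately from the lower bound $h_L \geq c'$ in Lemma 2.2: dividing by $D_n \geq 2^n$ and passing to the limit gives $\hat{h}_{\mathbf{f}}(x) \geq 0$. The easy direction of (iv) is that a finite $\mathbf{f}$-orbit makes $h_L \circ F_n$ bounded, so $\hat{h}_{\mathbf{f}}(x) = 0$. The converse is the only real obstacle, which I would settle with Northcott's finiteness. If $\hat{h}_{\mathbf{f}}(x) = 0$, then iterating (ii) yields $\hat{h}_{S^n(\mathbf{f})}(F_n(x)) = 0$ for every $n$; combining with (i) applied to $S^n(\mathbf{f})$ and the inequality $c(S^n(\mathbf{f})) \leq c(\mathbf{f})$ gives $h_L(F_n(x)) \leq 2c(\mathbf{f})$ uniformly in $n$. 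Since all the $f_i$ are morphisms over $K$, every $F_n(x)$ lies in the single number field of definition of $x$, so Lemma 2.2(1) bounds the orbit and forces $x$ to be $\mathbf{f}$-preperiodic.
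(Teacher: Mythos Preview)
Your argument is correct and is precisely the standard Tate-limit construction adapted to the non-autonomous setting, which is the approach taken by Kawaguchi in [4]. Note, however, that the paper you are reading does not supply its own proof of this lemma: it simply quotes the statement as Theorem~2.3 of [4] and uses it as a black box, so there is no in-paper proof to compare against. Your write-up could stand in for the omitted argument without change.
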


\begin{Lemma} (Corollary 2.4, [4])
 Assume $L$ is an ample line bundle on $X$.
 
 (1) Let $c$ be a nonnegative number, and $D$ a positive integer. Then the set
 \begin{center}
  $\displaystyle\bigcup_{\bf{f} \in \mathcal{B}_c} \{x \in X(\bar{K}) | [K(x):K] \leq D, x$ is $\bf{f}$-preperiodic $ \}$
 \end{center} is finite.
 
 (2) Let $\mathcal{H}=\{ g_1,...g_k \}$, and we set $J$ and $ W$ as in (2.1). Then for any positive integer $D$, the set
 \begin{center}
  $ \{x \in X(\bar{K}) | [K(x):K] \leq D, x$ is $\textbf{f}_w$-preperiodic for some $w \in W \}$
 \end{center} is finite.

\end{Lemma}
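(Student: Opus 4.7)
The plan is to reduce both parts to the classical Northcott finiteness property via the canonical height machinery established in the preceding lemma.

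For part (1), I would start from the characterization of preperiodicity given in Lemma 2.3(iv): since $L$ is ample, a point $x \in X(\bar K)$ is $\mathbf{f}$-preperiodic if and only if $\hat h_{\mathbf{f}}(x) = 0$. Combined with the estimate $|\hat h_{\mathbf{f}}(x) - h_L(x)| \le 2 c(\mathbf{f})$ from Lemma 2.3(i), any $\mathbf{f}$-preperiodic point satisfies
\begin{equation*}
h_L(x) \le 2c(\mathbf{f}) \le 2c
\end{equation*}
whenever $\mathbf{f} \in \mathcal{B}_c$. The crucial observation is that this upper bound $2c$ is uniform over the entire family $\mathcal{B}_c$, so the whole union sits inside the single set
\begin{equation*}
\{x \in X(\bar K) : [K(x):K] \le D,\ h_L(x) \le 2c\}.
\end{equation*}
Since $L$ is ample and $[\mathbb{Q}(x):\mathbb{Q}] \le D [K:\mathbb{Q}]$, Northcott's finiteness property (Lemma 1.2(1)) applies and this set is finite, proving (1).

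Part (2) would then be an immediate consequence: with $\mathcal{H} = \{g_1,\dots,g_k\}$ finite, setting $c^* := \max_{1 \le i \le k} c(g_i)$ gives $c(\mathbf{f}_w) \le c^*$ for every $w \in W$ (as already noted in the passage following (2.1)), so $\{\mathbf{f}_w : w \in W\} \subset \mathcal{B}_{c^*}$, and (2) follows from (1) applied with $c = c^*$.

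The only delicate point is really conceptual rather than computational: one must notice that the individual constants $c(\mathbf{f})$ are controlled uniformly across $\mathcal{B}_c$, so that although there are uncountably many sequences $\mathbf{f}$, the corresponding sets of preperiodic points all live in one Northcott-finite box. Everything else is a direct appeal to the previously stated lemmas, so I do not anticipate any serious obstacle.
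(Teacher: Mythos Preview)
Your argument is correct and is exactly the standard one: use Lemma~2.3(iv) to turn preperiodicity into $\hat h_{\mathbf f}(x)=0$, then Lemma~2.3(i) to get the uniform bound $h_L(x)\le 2c$ over all $\mathbf f\in\mathcal B_c$, and conclude by Northcott; part~(2) reduces to part~(1) via $c^*=\max_i c(g_i)$. Note, however, that the paper does not actually supply its own proof of this lemma---it is quoted verbatim as Corollary~2.4 of Kawaguchi~[4]---so there is nothing in the paper to compare your argument against beyond observing that what you wrote is the natural (and, in [4], the actual) proof.
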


Under similar conditions of the previous lemma, namely, $X$ is a projective variety over $K$, $L$ is a line bundle on $X$, $\mathcal{H}=\{ g_1,...g_k \}$,  $g_j^*L \cong L^{\otimes d_{g_j}}$, we have \begin{center}$g_1^*L \otimes... \otimes g_k^*L \cong L^{\otimes(d_{g_1}+...+ d_{g_k})}$.\end{center} Thus $(X,g_1,...,g_k)$ becomes a particular case of what we call a \textit{dynamical eigensystem} for $L$ of degree $d_{g_1}+...+ d_{g_k}$.
 For this, Kawaguchi also proved that 
 \begin{Lemma}(Theorem 1.2.1, [5]) There exists the canonical height function 
 \begin{center}
  $\hat{h}_{ \mathcal{H}} : X(\bar{K}) \rightarrow \mathbb{R}$
 \end{center} for $(X,g_1,...,g_k,L)$ characterized by the following two properties : 
 \newline (i) $\hat{h}_{\mathcal{H}} = h_{\mathcal{H}} + O(1) ;$ 
   \newline(ii) $\sum_{j=1}^k \hat{h}_{\mathcal{H}} \circ g_j = (d_{g_1}+...+ d_{g_k})\hat{h}_{\mathcal{H}}$.
  \end{Lemma}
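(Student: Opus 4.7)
The plan is to adapt the classical Tate telescoping construction to the weighted functional equation (ii). I would first set $d := d_{g_1}+\cdots+d_{g_k}$ and introduce the averaging operator
\begin{equation*}
T(h) \;:=\; \frac{1}{d}\sum_{j=1}^{k} h\circ g_j
\end{equation*}
on real-valued functions on $X(\bar K)$. A function $\hat h$ satisfies (ii) precisely when $T(\hat h)=\hat h$, so the task is to produce such a fixed point inside the class of functions that differ from a chosen height $h_L$ for $L$ by a bounded amount.

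Fix $h_L$. From the eigensystem hypothesis $g_j^*L\cong L^{\otimes d_{g_j}}$ and property (iii) of Lemma \ref{main}, one has $h_L\circ g_j = d_{g_j}\,h_L + O(1)$ for each $j$; averaging over $j$ gives $T(h_L)=h_L+O(1)$, so there is a constant $C_0$ with $\|T(h_L)-h_L\|_\infty \le C_0$. A straightforward pointwise estimate shows that on bounded perturbations $T$ satisfies
\begin{equation*}
\|T(h)-T(h')\|_\infty \;\le\; \frac{k}{d}\,\|h-h'\|_\infty .
\end{equation*}
Since each $d_{g_j}\ge 2$, we have $d\ge 2k$, so the contraction ratio $\lambda := k/d$ satisfies $\lambda \le 1/2 < 1$. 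Setting $\hat h_n := T^n(h_L)$ therefore yields $\|\hat h_{n+1}-\hat h_n\|_\infty \le \lambda^n C_0$, so the sequence $(\hat h_n)$ converges uniformly to a bounded-distance limit $\hat h_{\mathcal H}$. Passing to the limit in $\hat h_{n+1}=T(\hat h_n)$ gives (ii), while summing the geometric bound gives the explicit estimate $\|\hat h_{\mathcal H}-h_L\|_\infty \le C_0/(1-\lambda)$, which is (i).

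For uniqueness, if $\hat h$ and $\hat h'$ both satisfy (i) and (ii), their difference $\varphi := \hat h - \hat h'$ is bounded and $T$-invariant, so $\|\varphi\|_\infty \le \lambda\|\varphi\|_\infty$, forcing $\varphi\equiv 0$. The only genuinely delicate point in the whole argument is ensuring the strict contraction $\lambda<1$, which is exactly where the standing hypothesis $d_{g_j}\ge 2$ is used; everything else is the standard telescoping argument of Lemma 2.3 above, transported from the single-map setting to the weighted-sum setting of a dynamical eigensystem. I therefore expect no real obstacle beyond careful bookkeeping of the weights $d_{g_j}$.
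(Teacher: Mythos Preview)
Your argument is correct. The paper does not supply its own proof of this lemma; it simply quotes the result as Theorem~1.2.1 of Kawaguchi~[5], so there is no in-paper argument to compare against. Your contraction-mapping variant of the Tate--Call--Silverman construction is the standard route to such statements and is presumably close to what appears in~[5]; the only point worth a word of care is the bound $\|(h-h')\circ g_j\|_\infty\le\|h-h'\|_\infty$, which holds because each $g_j$ is a self-map of $X$, so the supremum is taken over no larger a set. One side remark: the paper's next cited result, Lemma~2.6 (Proposition~3.1 of~[4]), records the alternative formula $\hat h_{\mathcal H}(x)=\int_W \hat h_{\mathbf f_w}(x)\,d\mu(w)$ with $\mu$ the product of the measures assigning mass $d_{g_j}/d$ to $j$; this can be read as a second construction of the same object by averaging the sequence-canonical heights of Lemma~2.3, but again the paper only cites it and does not prove it.
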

  \begin{Lemma} (Proposition 3.1, [4])
   Give $J= \{ 1,..., k \}$ the discrete topology (each subset is an open set), and let $\nu$ be the measure on $J$ that assigns mass $\dfrac{d_{g_j}}{d_{g_1}+...+ d_{g_k}}$ to $j \in J$.
   Let $\mu:= \prod_{i=1}^\infty \nu$ be the product measure on $W$. Then we have, for $x \in X(\bar{K})$,
  \begin{center}
   $\hat{h}_{ \{g_1,..., g_k \}}(x)= \displaystyle\int_W \hat{h}_{ \bf{f}_w}(x) d\mu (w)$.
  \end{center}
  In particular, \begin{center}$|\hat{h}_{ \{g_1,..., g_k \}}(x) - h_L(x)| \leq 4c$ \end{center} for all $x \in X(\bar{K})$, where $c =\max \{ c(g_1),..., c(g_k) \}$.

  \end{Lemma}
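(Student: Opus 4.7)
The plan is to define $H(x) := \int_W \hat{h}_{\mathbf{f}_w}(x)\,d\mu(w)$ and show that $H$ satisfies the two characterising properties of $\hat{h}_{\{g_1,\dots,g_k\}}$ from Lemma 2.5 (Theorem 1.2.1 of [5]); by the uniqueness clause there, this forces $H = \hat{h}_{\{g_1,\dots,g_k\}}$ and the integral formula is established. Measurability and integrability of $w\mapsto \hat{h}_{\mathbf{f}_w}(x)$ are routine: one exhibits $\hat{h}_{\mathbf{f}_w}(x)$ as the pointwise limit of $(d_{g_{w_1}}\cdots d_{g_{w_n}})^{-1} h_L(g_{w_n}\circ\cdots\circ g_{w_1}(x))$, each term of which depends only on $w_1,\dots,w_n$, hence is measurable, and the entire family is uniformly bounded by $|h_L(x)| + 2c$ by Lemma 2.3(i).

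The bounded deviation property $H = h_L + O(1)$ is immediate: Lemma 2.3(i) gives $|\hat{h}_{\mathbf{f}_w}(x) - h_L(x)| \le 2c(\mathbf{f}_w) \le 2c$ for every $w$, and integrating against the probability measure $\mu$ yields $|H(x) - h_L(x)| \le 2c$, which in particular gives the stated bound $4c$.

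For the dynamical functional equation, the main step is to decompose $W = \bigsqcup_{j=1}^k \{w : w_1 = j\}$ and to identify, under the natural bijection $\{w : w_1 = j\} \cong W$ sending $(j, w') \mapsto w'$, the restricted measure with $\frac{d_{g_j}}{D}\,\mu$, where $D = d_{g_1} + \cdots + d_{g_k}$. Since $\mathbf{f}_{(j, w')} = (g_j, g_{w'_1}, g_{w'_2}, \dots)$ has shift $S(\mathbf{f}_{(j, w')}) = \mathbf{f}_{w'}$, Lemma 2.3(ii) gives the key identity
\begin{equation*}
\hat{h}_{\mathbf{f}_{(j,w')}}(x) \;=\; \tfrac{1}{d_{g_j}}\,\hat{h}_{\mathbf{f}_{w'}}(g_j(x)).
\end{equation*}
Integrating this over $W$ and multiplying by $d_{g_j}/D$ gives $\int_{\{w_1=j\}} \hat{h}_{\mathbf{f}_w}(x)\,d\mu(w) = \tfrac{1}{D} H(g_j(x))$; summing over $j$ yields $H(x) = \tfrac{1}{D}\sum_{j=1}^k H(g_j(x))$, i.e.\ $\sum_{j=1}^k H\circ g_j = D\cdot H$, which is property (ii) of $\hat{h}_{\{g_1,\dots,g_k\}}$.

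The only substantive obstacle is verifying the measure-theoretic identification of $\mu\!\restriction_{\{w_1=j\}}$ with $\tfrac{d_{g_j}}{D}\,\mu$ under the shift bijection; this follows directly from the product structure $\mu = \nu \times \prod_{i\ge 2}\nu$ and the definition $\nu(\{j\}) = d_{g_j}/D$, but it must be carried out carefully to justify the Fubini-type exchange of integration with the decomposition. Once both properties are in hand, uniqueness in Lemma 2.5 finishes the argument, and the bound $|\hat{h}_{\{g_1,\dots,g_k\}}(x) - h_L(x)| \le 4c$ is an immediate corollary.
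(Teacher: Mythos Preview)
The paper does not give its own proof of this statement: Lemma 2.6 is simply quoted from Proposition~3.1 of Kawaguchi's paper~[4], with no argument supplied. So there is no in-paper proof to compare against.

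Your proposed argument is correct and is in fact the natural (and, as far as I know, the original) route: verify that $H(x):=\int_W \hat h_{\mathbf f_w}(x)\,d\mu(w)$ satisfies the two axioms characterising $\hat h_{\{g_1,\dots,g_k\}}$ in Lemma~2.5 and conclude by uniqueness. Your justification of measurability via the finite-level approximations $(d_{g_{w_1}}\cdots d_{g_{w_n}})^{-1}h_L\bigl(g_{w_n}\circ\cdots\circ g_{w_1}(x)\bigr)$ is the right idea; note that this limit formula is not stated explicitly in Lemma~2.3 but follows immediately by iterating~(ii) and applying~(i) to the shifted sequence. The decomposition over the first coordinate and the identification of $\mu\!\restriction_{\{w_1=j\}}$ with $\tfrac{d_{g_j}}{D}\mu$ via the shift is exactly the point where the specific choice of weights $\nu(\{j\})=d_{g_j}/D$ is used, and you handle it correctly. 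One small remark: your argument actually yields the sharper bound $|H(x)-h_L(x)|\le 2c$, so the stated $4c$ is not optimal; you observe this implicitly when you say ``which in particular gives the stated bound~$4c$''.
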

  
  \section{Distance and dynamics on the projective line}
  
  For each $v \in M_K$, we let $\rho_v$ denote the chordal metric defined on $\mathbb{P}^1(\mathbb{C}_v)$, where we recall that for 
  $[x_1,y_1], [x_2,y_2] \in \mathbb{P}^1(\mathbb{C}_v)$,
  \begin{center} $  
\rho_v([x_1,y_1], [x_2,y_2]) = 
     \begin{cases}
       \ \dfrac{|x_1y_2 - x_2y_1|_v}{\sqrt{|x_1|_v^2 +|y_1|_v^2} \sqrt{|x_2|_v^2 +|y_2|_v^2}}
       \ &\quad\text{if} ~ v \in M_K^\infty, \\
       \
       \
       \ \dfrac{|x_1y_2 - x_2y_1|_v}{\max \{ |x_1|_v,|y_1|_v \} \max \{ |x_2|_v,|y_2|_v \}}
       \ &\quad\text{if} ~ v \in M_K^0. \\

     \end{cases}
 $ \end{center}
 \begin{Def} The \textit{logarithmic chordal metric function} 
 \begin{center}
  $\lambda_v : \mathbb{P}^1(\mathbb{C}_v) \times \mathbb{P}^1(\mathbb{C}_v) \rightarrow \mathbb{R} \cup \{ \infty \}  $
  \end{center} is defined by
  \begin{center}
   $\lambda_v([x_1,y_1], [x_2,y_2]) =- \log \rho_v([x_1,y_1], [x_2,y_2]).$
  \end{center}
  \end{Def}
  
  It is a matter of fact that $\lambda_v$ is a particular choice of an \textit{arithmetic distance function} as defined by Silverman [3], which is
  a local height function $\lambda_{\mathbb{P}^1 \times \mathbb{P}^1, \Delta}$, where $\Delta$ is the diagonal of $\mathbb{P}^1 \times \mathbb{P}^1$.
  The logarithmic chordal metric and the usual metric can relate in the following way.
 \begin{Lemma}(Lemma 3, [3])
  Let $v \in M_K$ and let $\lambda_v$ be the logarithmic chordal metric on $\mathbb{P}^1(\mathbb{C}_v)$. Define $l_v=2$ if $v$ is archimedean, and $l_v=1$ if $v$ is nonarchimedean. 
  Then for $x,y \in \mathbb{C}_v$ the inequality $\lambda_v(x,y) > \lambda_v(y,\infty) +\log l_v $ implies
  \begin{center}
   $ \lambda_v(y,\infty) \leq \lambda_v(x,y) + \log |x-y|_v \leq 2 \lambda_v(x,y) + \log l_v$.
  \end{center}

 \end{Lemma}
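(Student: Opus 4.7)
The strategy is to unfold the definition of the chordal metric and reduce both inequalities to elementary $v$-adic estimates. For $z\in\mathbb{C}_v$ introduce the local normaliser
\[
H_v(z):=\sqrt{|z|_v^2+1}\ \text{if}\ v\in M_K^\infty,\qquad H_v(z):=\max\{|z|_v,1\}\ \text{if}\ v\in M_K^0.
\]
Direct inspection of the definition of $\rho_v$ gives $\rho_v(x,y)=|x-y|_v/(H_v(x)H_v(y))$ and $\rho_v(y,\infty)=1/H_v(y)$, so that
\[
\lambda_v(y,\infty)=\log H_v(y),\qquad \lambda_v(x,y)+\log|x-y|_v=\log(H_v(x)H_v(y)),
\]
and the hypothesis $\lambda_v(x,y)>\lambda_v(y,\infty)+\log l_v$ rewrites as $|x-y|_v<H_v(x)/l_v$.

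With this rewriting, the left inequality reads $H_v(y)\le H_v(x)H_v(y)$, which is immediate from $H_v(x)\ge 1$ and uses nothing of the hypothesis. The right inequality rearranges to $|x-y|_v^{\,2}\le l_v\,H_v(x)H_v(y)$, and this is where the hypothesis enters. I would settle this by splitting first on archimedean versus nonarchimedean, and then on whether $|x|_v\ge 1$ or $|x|_v<1$.

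In the nonarchimedean case $l_v=1$. If $|x|_v\ge 1$ the hypothesis reads $|x-y|_v<|x|_v$, so the strong triangle inequality forces $|y|_v=|x|_v$, giving $H_v(x)H_v(y)=|x|_v^{\,2}>|x-y|_v^{\,2}$. If $|x|_v<1$ the hypothesis reads $|x-y|_v<1$, which yields $|y|_v<1$ and $H_v(x)H_v(y)=1>|x-y|_v^{\,2}$. Both subcases are essentially formal consequences of the ultrametric inequality.

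The archimedean case is where the real work lies. With $l_v=2$ the hypothesis gives $|x-y|_v^{\,2}<H_v(x)^2/4$, so it suffices to show $H_v(y)\ge H_v(x)/8$. When $|x|_v<1$ this holds because $H_v(x)\le\sqrt{2}$ and $H_v(y)\ge 1$. When $|x|_v\ge 1$ one has $H_v(x)\le\sqrt{2}\,|x|_v$, and the hypothesis combined with the ordinary triangle inequality gives $|y|_v\ge|x|_v-|x-y|_v>(1-1/\sqrt{2})|x|_v$, whence $H_v(y)\ge|y|_v>(1-1/\sqrt{2})H_v(x)/\sqrt{2}>H_v(x)/8$, as required. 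The main obstacle is precisely this archimedean estimate: the classical triangle inequality is sharp only up to a multiplicative constant, so one must keep track of the numerical slack to ensure that the conclusion fits within the factor $l_v=2$; the choice $l_v=2$ in the hypothesis is exactly what buys this slack.
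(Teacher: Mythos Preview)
The paper does not give its own proof of this lemma: it is quoted verbatim as Lemma~3 of Hsia--Silverman [3] and used as a black box. There is therefore nothing in the present paper to compare against.

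That said, your argument is correct. The rewriting $\lambda_v(x,y)+\log|x-y|_v=\log(H_v(x)H_v(y))$ and $\lambda_v(y,\infty)=\log H_v(y)$ reduces the two conclusions to $H_v(x)\ge 1$ (trivial) and $|x-y|_v^{2}\le l_v\,H_v(x)H_v(y)$, and your case split on archimedean/nonarchimedean and on $|x|_v\gtrless 1$ handles the latter cleanly. In the nonarchimedean case the ultrametric inequality makes the estimate essentially an equality of valuations; in the archimedean case your bound $H_v(y)>(1-1/\sqrt{2})H_v(x)/\sqrt{2}>H_v(x)/8$ is exactly the kind of numerical slack the constant $l_v=2$ is designed to absorb. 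This unwinding-and-case-split approach is the standard way to prove the lemma and is essentially what one finds in [3], so no different route is being taken.
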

 
  Now, let $\mathcal{F}= \{\phi_1,..., \phi_k \}$ be such that each $\phi_j: \mathbb{P}^1 \rightarrow \mathbb{P}^1$ is a rational map of degree $d_j \geq 2$ defined over $K$.
We set $J, W$, and $w$ as in (2.1). In this situation we let \begin{center}$\Phi_w^{(n)}=\phi_{w_n} \circ ... \circ \phi_{w_1}$ \end{center} with $\Phi_w^{(0)}=$Id, 
and also $\mathcal{F}_n :=\{ \Phi_w^{(n)} | w \in W \}$. 

For a point $P \in \mathbb{P}^1$, the $\mathcal{F}$-orbit of $P$ is defined as 
\begin{center}
 $\mathcal{O}_{\mathcal{F}}(P)=\{ \phi(P) | \phi \in \bigcup_{n \geq 1} \mathcal{F}_n \}= \{ \Phi_w^{(n)}(P) | n \geq 0, w \in W \} = \bigcup_{w \in W} \mathcal{O}_{\Phi_w} (P)$.
 \end{center}
 The point $P$ is called \textit{preperiodic for $\mathcal{F}$} if $\mathcal{O}_{\mathcal{F}}(P)$ is finite.
 
 We recall that for $P=[x_0, x_1] \in \mathbb{P}^1(K)$ the height of $P$ is 
 \begin{center}
  $h(P)= \sum_{v \in M_K} d_v \log(\max \{ |x_1|_v, |x_1|_v \}$.
 \end{center} And using the definition of $\lambda_v$, we see that
 \begin{center}
  $h(P)= \sum_{v \in M_K}d_v \lambda_v(P, \infty) + O(1)$.
 \end{center}
 
 For a polynomial $f= \sum a_i z^i$ and an absolute value $v \in M_K$, we define 
  $|f|_v=\max_i \{ |a_i|_v \}$  and \begin{center} $h(f)= \sum_{v \in M_K} d_v \log |f|_v$.
 \end{center}
 Given a rational function $\phi(z)= f(z)/g(z) \in K(z)$ of degree $d$ written in normalized form, let us write $f(z)=\sum_{i \leq d} a_i z^i, g(z)=\sum_{i \leq d} b_i z^i $ with $a_d$ and $b_d$ different from zero,
 and $f$ and $g$ relatively prime in $K[z]$. 
 
 For $v \in M_K$, we set $|\phi|_v= \max \{ |f|_v, |g|_v \}$, and then the height of $\phi$ is defined by
 \begin{center}
  $h(\phi):= \sum_{v \in M_K} d_v \log |\phi|_v$.
 \end{center}
 For $\mathcal{F}= \{ \phi_1,..., \phi_k \}$, we define \begin{center} $h(\mathcal{F}) := \max_i h(\phi_i)$. \end{center}
 
 Also, for any $\phi(z), \psi(z)$ rational functions in $K(z)$, it is a fact, by Proposition 5 (c), [3], that
 \begin{center}
  $h(\phi \circ \psi) \leq h(\phi) + (\deg \phi)h(\psi) + (\deg \phi )(\deg \psi) \log 8$.
 \end{center} Using this one can conclude the following preliminar estimate:
 
 \begin{Prop}
  Let $\mathcal{F}= \{ \phi_1,..., \phi_k \}$ be a finite set of rational functions with $\deg \phi_i= d_i \geq 2$, and $d:= \max_i d_i$. Then for all $n \geq 1$ and $\phi \in \mathcal{F}_n$, we have
  \begin{center}
   $h(\phi) \leq \left(\dfrac{d^n-1}{d-1}\right)h(\mathcal{F}) + d^2\left(\dfrac{d^{n-1}-1}{d-1}\right)\log 8$.
  \end{center}
 \end{Prop}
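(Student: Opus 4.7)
The plan is to argue by induction on $n$, feeding compositions into the estimate $h(\phi \circ \psi) \leq h(\phi) + (\deg \phi) h(\psi) + (\deg \phi)(\deg \psi) \log 8$ recalled immediately above the statement.

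The base case $n=1$ is immediate: any $\phi \in \mathcal{F}_1 = \mathcal{F}$ satisfies $h(\phi) \leq h(\mathcal{F}) = \frac{d-1}{d-1}h(\mathcal{F})$, and the second term vanishes since $d^{0}-1=0$.

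For the inductive step, given an arbitrary $\phi = \phi_{w_n} \circ \cdots \circ \phi_{w_1} \in \mathcal{F}_n$, I would decompose it as $\phi = \Phi \circ \phi_{w_1}$ where $\Phi := \phi_{w_n} \circ \cdots \circ \phi_{w_2}$ is viewed as an element of $\mathcal{F}_{n-1}$ via the shifted word $(w_2,\ldots,w_n)$. Since $\deg \Phi \leq d^{n-1}$ and $\deg \phi_{w_1} \leq d$, applying the composition bound gives
\[
h(\phi) \;\leq\; h(\Phi) + d^{n-1}\, h(\phi_{w_1}) + d^{n-1}\cdot d\,\log 8 \;\leq\; H_{n-1} + d^{n-1}h(\mathcal{F}) + d^{n}\log 8,
\]
where $H_{n-1}$ denotes the claimed bound at level $n-1$. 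Collecting terms and using the elementary identities
\[
\frac{d^{n-1}-1}{d-1} + d^{n-1} = \frac{d^n-1}{d-1}, \qquad d^{2}\,\frac{d^{n-2}-1}{d-1} + d^{n} = d^{2}\,\frac{d^{n-1}-1}{d-1},
\]
yields exactly $H_n$, completing the induction.

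There is no deep obstacle here; the only subtle point is the \emph{direction} of the decomposition. Peeling off the innermost map $\phi_{w_1}$ and leaving $\Phi$ on the outside makes the degree factor $\deg\Phi$ that multiplies $h(\phi_{w_1})$ only $d^{n-1}$. Peeling off the outermost $\phi_{w_n}$ instead would multiply the inductive bound $H_{n-1}$ by $d$, blowing up the $\log 8$ coefficient and yielding a strictly weaker constant than the one in the statement. Once the correct decomposition is chosen, the argument is routine bookkeeping with geometric sums.
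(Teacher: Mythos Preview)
Your proof is correct and follows exactly the same approach as the paper: induction on $n$, peeling off the innermost map and applying the composition bound $h(\phi\circ\psi)\le h(\phi)+(\deg\phi)h(\psi)+(\deg\phi)(\deg\psi)\log 8$ to the decomposition $\phi=(\phi_{w_n}\circ\cdots\circ\phi_{w_2})\circ\phi_{w_1}$. The only differences are cosmetic (the paper steps from $n$ to $n+1$ rather than from $n-1$ to $n$), and your remark on why the other decomposition direction fails is a useful addition not present in the original.
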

 \begin{proof}
  For $n=1$ the result is easily true. We assume the it is true for $n$. Let $\phi = \phi_{i_{n+1}} \circ \phi_{i_n} \circ ... \circ \phi_{i_1} \in \mathcal{F}_{n+1}$.
  Then by the previous proposition and the induction hypothesis 
  \begin{align*}
  h(\phi) & \leq h(\phi_{i_{n+1}} \circ \phi_{i_n} \circ ... \circ \phi_{i_2}) + d^n h(\phi_{i_1}) + d^{n+1} \log 8\\
  &\leq \left(\dfrac{d^n-1}{d-1}\right)h(\mathcal{F}) + d^2\left(\dfrac{d^{n-1}-1}{d-1}\right)\log 8 + d^n h(\mathcal{F}) + d^{n+1} \log 8\\
  &\leq \left(\dfrac{d^{n+1}-1}{d-1}\right)h(\mathcal{F}) + d^2\left(\dfrac{d^n-1}{d-1}\right)\log 8,
  \end{align*} and we conclude thus the proof.
  \end{proof}
  
  \begin{Lemma}(Theorem 3.20, [8]) For a rational map $\phi: \mathbb{P}^1 \rightarrow \mathbb{P}^1$ of degree $d \geq 2$ defined over $K$ and $L= \mathcal{O}_{\mathbb{P}^1}(1)$, it is true that
  
  (a) $|h(\phi (P))- dh(P)| \leq c_1h(\phi) + c_2$.
  
  (b) $\hat{h}_{\phi}(P)= \lim_n h(\phi^{(n)}(P))/d^n$.
  
  (c) $|\hat{h}_{\phi}(P) - h(P)| \leq c_3h(\phi) + c_4$.
  
  Where $c_1, c_2, c_3$ and $c_4$ above depend only on $d$.
  \end{Lemma}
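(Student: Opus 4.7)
The plan is to prove the three parts in order, with (a) doing the real work and (b), (c) following by Tate's standard telescoping argument.

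For part (a), I would write $\phi = [F(X,Y):G(X,Y)]$ with $F, G \in K[X,Y]$ homogeneous of degree $d$, coprime, and chosen so that $\max_v(\{|F|_v,|G|_v\}) = |\phi|_v$ at every place. The upper bound $h(\phi(P)) \leq dh(P) + c_1 h(\phi) + c_2'$ is immediate from the triangle inequality (or ultrametric inequality): for a homogeneous form of degree $d$ with $d+1$ monomials,
\begin{equation*}
\max\{|F(x,y)|_v,|G(x,y)|_v\} \leq \varepsilon_v\cdot |\phi|_v\cdot \max\{|x|_v,|y|_v\}^d,
\end{equation*}
where $\varepsilon_v = d+1$ if $v$ is archimedean and $1$ otherwise, so summing $d_v\log$ over $v\in M_K$ yields the bound. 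The lower bound is the delicate direction and uses the resultant $R=\mathrm{Res}(F,G)\ne 0$: by Hilbert's Nullstellensatz there exist homogeneous $A_i, B_i \in \mathcal{O}_K[X,Y]$ (of degree $2d-1$ in each monomial) with
\begin{equation*}
A_1F + B_1G = R\, X^{2d-1}, \qquad A_2F + B_2G = R\, Y^{2d-1}.
\end{equation*}
Evaluating at $(x,y)$ and taking maxima shows that $|R|_v\max\{|x|_v,|y|_v\}^{2d-1}$ is controlled by $|\phi|_v^{2d-1}\cdot\max\{|F|_v,|G|_v\}\cdot\max\{|x|_v,|y|_v\}^{d-1}$ (again up to a combinatorial constant depending only on $d$), which on rearranging and summing over $v$ delivers
\begin{equation*}
dh(P) \leq h(\phi(P)) + c_1'' h(\phi) + c_2''
\end{equation*}
after absorbing $\sum_v d_v \log|R|_v = h(R) = O(h(\phi))$ via the standard resultant–height estimate. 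Combining both directions yields (a) with constants depending only on $d$.

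For part (b), I apply (a) with $P$ replaced by $\phi^{(n)}(P)$, noting $h(\phi^{(n)}(P))$ in place of $dh(\phi^{(n-1)}(P))$, so that
\begin{equation*}
\left|\tfrac{1}{d^{n+1}}h(\phi^{(n+1)}(P)) - \tfrac{1}{d^n}h(\phi^{(n)}(P))\right| \leq \tfrac{c_1 h(\phi)+c_2}{d^{n+1}}.
\end{equation*}
The right side is the $n$-th term of a convergent geometric series, so the sequence $\{d^{-n}h(\phi^{(n)}(P))\}_n$ is Cauchy in $\mathbb{R}$ and its limit defines $\hat h_\phi(P)$, which is the classical Tate limit.

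Part (c) then follows by the telescoping identity
\begin{equation*}
\hat h_\phi(P) - h(P) = \sum_{n=0}^{\infty}\left(\tfrac{1}{d^{n+1}}h(\phi^{(n+1)}(P)) - \tfrac{1}{d^n}h(\phi^{(n)}(P))\right),
\end{equation*}
whose absolute value is bounded by $\sum_{n\geq 0}(c_1 h(\phi)+c_2)/d^{n+1} = (c_1 h(\phi)+c_2)/(d-1)$, giving constants $c_3, c_4$ depending only on $d$. The main obstacle is isolating the explicit dependence on $h(\phi)$ in the lower bound of (a); this is exactly where one must avoid hiding the coefficients of $F, G$ inside an $O(1)$ and instead track the resultant contribution carefully through the Nullstellensatz identity.
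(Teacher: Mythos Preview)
The paper does not give its own proof of this lemma: it is quoted verbatim as Theorem~3.20 from reference~[8] (Silverman, \emph{The Arithmetic of Dynamical Systems}) and used as a black box. Your sketch is essentially the standard argument that appears there---upper bound by the triangle/ultrametric inequality, lower bound via the Nullstellensatz/resultant identity, then Tate's telescoping for (b) and (c)---so in spirit you are reproducing the cited proof rather than offering an alternative.

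One small correction worth flagging: in your lower-bound step you write ``absorbing $\sum_v d_v \log|R|_v = h(R) = O(h(\phi))$''. In fact $\sum_{v\in M_K} d_v \log|R|_v = 0$ by the product formula, since $R\in K^\times$; the genuine $O(h(\phi))$ contribution in the lower bound comes not from the resultant but from the local sizes of the auxiliary forms $A_i,B_i$, whose coefficients are polynomials (of degree $2d-1$) in the coefficients of $F,G$. This does not affect the validity of the argument, but as you yourself note that ``the main obstacle is isolating the explicit dependence on $h(\phi)$'', it is worth getting the bookkeeping right at exactly that point.
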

  Gathering these facts with Lemma 2.3 and Lemma 2.6, we derive the following:
  \begin{Lemma} Let $\mathcal{F}= \{\phi_1,..., \phi_k \}$ such that each $\phi_j: \mathbb{P}^1 \rightarrow \mathbb{P}^1$ is a rational function of degree $d_j \geq 2$ over $K$.
There are constants $c_1, c_2, c_3$ and $c_4$ depending only on the degrees $d_1,...,d_k$ such that
  \begin{center}
   (i) $ |\hat{h}_{\Phi_{w}}(P) - h(P)| \leq c_1 h(\mathcal{F}) + c_3$ ,
   
   (ii)  $|\hat{h}_{\mathcal{F}}(P) - \hat{h}_{\Phi_{w}}(P)| \leq c_3h(\mathcal{F}) + c_4$
  \end{center} for any $P$ whose corresponding orbits are well defined, and any $w= (w_j)_{j=1}^\infty \in W$.

  \end{Lemma}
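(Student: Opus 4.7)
The plan is to realize both bounds as consequences of the abstract results Lemma 2.3 and Lemma 2.6 already proved, once the abstract constant $c(\mathbf{f})$ has been controlled by $h(\mathcal{F})$. The only real work is to convert the Silverman-style estimate in Lemma 3.4(a) into a bound on $c(\phi_j)$ that is uniform across $\mathcal{F}$.

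First I would estimate $c(\phi_j)$ for each generator. By Lemma 3.4(a) applied to $\phi_j$ with $d_j\ge 2$, there exist constants $\alpha,\beta$ depending only on $d_j$ such that
\[
|h(\phi_j(P))-d_j h(P)|\le \alpha\,h(\phi_j)+\beta
\]
for all $P\in\mathbb{P}^1(\bar K)$. Dividing by $d_j$ and taking the supremum over $P$ yields
\[
c(\phi_j)=\sup_{P}\left|\frac{1}{d_j}h(\phi_j(P))-h(P)\right|\le \frac{\alpha}{d_j}h(\phi_j)+\frac{\beta}{d_j}\le \alpha'\,h(\mathcal{F})+\beta',
\]
with $\alpha',\beta'$ depending only on $d_1,\dots,d_k$ (since $h(\phi_j)\le h(\mathcal{F})$). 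Consequently, for any $w\in W$,
\[
c(\Phi_w)=\sup_{i\ge 1}c(\phi_{w_i})\le \alpha'\,h(\mathcal{F})+\beta',
\]
so the sequence $\Phi_w$ lies in $\mathcal{B}_{\alpha' h(\mathcal{F})+\beta'}$, uniformly in $w$. Part (i) is then immediate from Lemma 2.3(i), which guarantees
\[
|\hat h_{\Phi_w}(P)-h(P)|\le 2c(\Phi_w)\le 2\alpha'\,h(\mathcal{F})+2\beta',
\]
giving the desired constants $c_1,c_3$ in the statement.

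For (ii), I would combine the previous step with the uniform comparison between $\hat h_{\mathcal{F}}$ and $h$ supplied by Lemma 2.6: with $c=\max_j c(\phi_j)\le \alpha' h(\mathcal{F})+\beta'$ from above, one has
\[
|\hat h_{\mathcal{F}}(P)-h(P)|\le 4c\le 4\alpha'\,h(\mathcal{F})+4\beta'.
\]
The triangle inequality then gives
\[
|\hat h_{\mathcal{F}}(P)-\hat h_{\Phi_w}(P)|\le |\hat h_{\mathcal{F}}(P)-h(P)|+|h(P)-\hat h_{\Phi_w}(P)|\le 6\alpha'\,h(\mathcal{F})+6\beta',
\]
which yields (ii) after renaming the constants. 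The essentially routine obstacle is merely the bookkeeping of the constants through Proposition 3.3 and Lemma 3.4, and verifying they only depend on the degrees $d_1,\dots,d_k$; no new geometric input is needed beyond what has already been proved in Sections 2 and 3.
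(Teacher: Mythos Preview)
Your proposal is correct and follows exactly the approach the paper intends: the paper's ``proof'' is the single sentence ``Gathering these facts with Lemma 2.3 and Lemma 2.6, we derive the following,'' and you have faithfully unpacked that sentence by using Lemma 3.4(a) to bound each $c(\phi_j)$ in terms of $h(\mathcal{F})$, feeding this into Lemma 2.3(i) for part (i), into Lemma 2.6 for the $\hat h_{\mathcal F}$ comparison, and then applying the triangle inequality for part (ii). The only (harmless) slip is the mention of Proposition 3.3 in your final remark---that proposition bounds $h(\mathcal{F}_n)$ and is not actually used here.
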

  
  \section{A distance estimate and a quantitative version of Roth's Theorem}
We will state two known results that will be needed to prove our main theorems. The first one is a result due to Silverman that gives explicit estimates for the dependence on local heights of points and function.
 
 Let us recall that, for a rational function $f(z)$, $P \neq \infty$ and $f(P) \neq \infty$, the \textit{ramification index} of $f$ at $P$ is defined as the order of $P$ as a zero of the rational function $f(z) - f(P)$, i.e., 
 \begin{center}
  $e_P(f)=$ ord$_P(f(z) - f(P))$.
 \end{center} If $P= \infty$, or $f(P)= \infty$ we change coordinates through a linear fractional transformation $L$, such that $L^{-1}(P) =\beta \neq \infty, L^{-1}(f(L(\beta))) \neq \infty $, and define $e_P(f)=e_{\beta}(L^{-1} \circ f \circ L)$. It will not depend on the choice of $L$. We say that $f$ is \textit{totally ramified} at $P$ if $e_P(f)=\deg f$. It is also an exercise to show that 
 \begin{center}
 $e_P(g \circ f)= e_P(f)e_g(f(P))$
 \end{center} for every $f, g$ rational functions and $P \in K \cup \{ \infty \}$.

 The first result is as following:
 
 \begin{Lemma} (Proposition 7, [3])
  Let $\psi \in K(z)$ be a nontrivial rational function, let $S \subset M_K$ be a finite set of absolute values on $K$, each extended in some way to $\bar{K}$, and let $A, P \in \mathbb{P}^1(K)$. Then 
 \begin{center}
  $\sum_{v \in S} \max\limits_{A^{\prime} \in \psi^{-1}(A)} e_{A^{\prime}}(\psi) d_v \lambda_v (P, A^{\prime}) \geq 
  \sum_{v \in S} d_v \lambda_v (\psi(P),A) - O(h(A) + h(\psi) +1)$,
 \end{center} where the implied constant depends only on the degree of the map $\psi$.
\end{Lemma}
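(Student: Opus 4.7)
The plan is to deduce the inequality from the functoriality of the logarithmic chordal metric under $\psi$. After a linear-fractional change of coordinates moving $A$ and the points of $\psi^{-1}(A)$ away from $\infty$ (which only introduces errors controlled by $h(A)+h(\psi)$, since the coefficients of $\psi$ transform with bounded $v$-adic distortion), one may write
\[
\psi(z)-A \;=\; c\,\frac{\prod_{A'\in\psi^{-1}(A)}(z-A')^{e_{A'}(\psi)}}{D(z)},
\]
where $D(z)$ is essentially the denominator of $\psi$ in a normalised presentation and $c$ is a leading coefficient. Standard Mahler/resultant estimates control the $v$-adic sizes of $c$, of the coefficients of $D$, and of the preimages $A'$ themselves, purely in terms of $|\psi|_v$ and $|A|_v$.

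Taking $-\log|\cdot|_v$ of this factorisation and translating between affine absolute values and the chordal $\lambda_v$ via the definition of $\rho_v$ given in Section 3 (and the comparison in Lemma 3.2) gives, at each place, a local estimate of the form
\[
\lambda_v(\psi(P),A) \;\le\; \sum_{A'\in\psi^{-1}(A)} e_{A'}(\psi)\,\lambda_v(P,A') + \delta_v,
\]
whose weighted sum over all places satisfies $\sum_{v\in M_K} d_v\,\delta_v = O(h(A)+h(\psi)+1)$, with implied constant depending only on $\deg\psi$. This realises the functorial identity that $\lambda_v(\psi(\cdot),A)$ and $\sum_{A'} e_{A'}(\psi)\,\lambda_v(\cdot,A')$ agree up to a globally bounded function, a standard consequence of the Weil height machine. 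To pass from the sum to the maximum, I exploit the fact that $\lambda_v \ge 0$ (since $\rho_v \le 1$) and that at each $v$ at most one preimage can lie genuinely close to $P$: if $P$ were simultaneously near two distinct preimages $A'_i$ and $A'_j$, the triangle (or ultrametric) inequality satisfied by $\rho_v$ would force $\lambda_v(A'_i,A'_j) \ge \min\{\lambda_v(P,A'_i),\lambda_v(P,A'_j)\} - O(1)$, contradicting the separation of preimages once $P$ is sufficiently close. Letting $A'_v$ denote this closest preimage (if any), the remaining terms contribute at most $\lambda_v(A'_v,A'')+O(1)$ each, and hence
\[
\sum_{A'} e_{A'}(\psi)\,\lambda_v(P,A') \;\le\; \max_{A'} e_{A'}(\psi)\,\lambda_v(P,A') + \eta_v,
\]
with $\sum_v d_v \eta_v = O(h(A)+h(\psi)+1)$ from the same height bounds applied to the pairwise chordal distances between preimages of $A$.

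Combining the two local estimates and restricting the weighted sum to $v\in S$ (which only discards nonnegative terms on the left, since $\lambda_v \ge 0$) yields the stated inequality, with implied constant depending only on $\deg\psi$. The main obstacle is the bookkeeping: verifying that the local errors $\delta_v$ and $\eta_v$ arising from the $v$-adic sizes of the leading coefficient $c$, of the denominator $D$, and of the mutual distances between preimages, collapse under weighted summation to an explicit $O(h(A)+h(\psi)+1)$ whose constant depends only on $\deg\psi$. This ultimately reduces to standard Mahler-measure and height-comparison lemmas on $\mathbb{P}^1$, together with the estimate $h(A')\ll_{\deg\psi} h(A)+h(\psi)$ for the preimages of $A$ under $\psi$.
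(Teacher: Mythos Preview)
The paper does not prove this lemma; it is quoted verbatim as Proposition~7 of Hsia--Silverman~[3] and used as a black box. So there is no ``paper's own proof'' to compare against here.

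That said, your sketch is essentially the right strategy and is in fact the approach taken in~[3] itself: factor $\psi(z)-A$ over the preimages, translate the factorisation into a local inequality for $\lambda_v$, and then argue that at each place only one preimage can be genuinely close to $P$, so the sum over $A'$ collapses to the maximum up to controllable error. Two points deserve more care than you give them. First, in the sum-to-max step, the error you pick up at each $v$ involves $\lambda_v(A'_v,A'')$ with $A'_v$ depending on $v$; to bound $\sum_v d_v\,\eta_v$ you really need a global estimate on $\sum_v d_v\,\lambda_v(A',A'')$ uniformly over all pairs of preimages, which comes from the height bound $h(A')\ll_{\deg\psi} h(A)+h(\psi)$ together with the decomposition of $h$ into local $\lambda_v$'s. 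Second, the restriction to $v\in S$ at the end is not quite ``discarding nonnegative terms on the left'': the local errors $\delta_v,\eta_v$ need not be nonnegative place-by-place, so one should first establish the inequality at each individual $v\in S$ (with a local error whose global sum over \emph{all} places is controlled), then sum only over $S$ and bound $\sum_{v\in S} d_v(\delta_v+\eta_v)$ by the full sum $\sum_{v\in M_K} d_v|\delta_v+\eta_v|$. With those two points tightened, your argument goes through; but since the present paper simply cites the result, no proof is needed here.
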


The second result is the following quantitative version of Roth's theorem.

\begin{Lemma} (Theorem 10, [3])
 Let $S$ be a finite subset of $M_K$ that contains all infinite places. We assume that each place in $S$ is extended to $\bar{K}$ in some fashion. Let $s$ be the cardinality of $S$, $\Upsilon$ a finite $G_{\bar{K}/K}$-invariant subset of $\bar{K}$, $\beta$ a map $S \rightarrow \Upsilon$, $\mu >2$, and $M \geq 0$. Then there are constants $r_1$ and $r_2$, depending only on $[K: \mathbb{Q}], \#\Upsilon$ and $\mu$, such that there are at most $4^sr_1$ elements $x \in K$ satisfying both of the following conditions:
 \newline \newline (1) $\sum_{v \in S} d_v \log^+ |x - \beta_v|_v^{-1} \geq \mu h(x) - M$.
 \newline \newline (2) $h(x) \geq r_2 \max\limits_{v \in S} \{h(\beta_v), M, 1 \}$.
\end{Lemma}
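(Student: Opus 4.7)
The plan is to reduce the multi-target approximation inequality in condition (1) to a classical quantitative Roth's theorem for a single algebraic target, paying a combinatorial cost that is exponential in $s=|S|$. A classical quantitative form of Roth (e.g.\ in the form due to Evertse, or Bombieri--van der Poorten) gives, for a fixed $\beta\in \Upsilon$ and a fixed place $v$, a bound depending only on $[K:\mathbb{Q}]$, $\#\Upsilon$, and $\mu$ on the number of $x\in K$ satisfying a single-place inequality $d_v\log^+|x-\beta|_v^{-1}\geq \mu' h(x)-M$ together with a large-height hypothesis of the form of condition (2). The task is to assemble such bounds uniformly over the data $(\beta_v)_{v\in S}$.

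First I would slice the set of solutions by recording, for each $v\in S$, a coarse dyadic category for the local contribution $d_v\log^+|x-\beta_v|_v^{-1}$; concretely, each $v$ is placed into one of a bounded number of classes encoding whether $v$ is archimedean and whether the contribution at $v$ is ``large'' or ``small'' relative to an even split of the right-hand side $\mu h(x)-M$. This produces roughly $4^s$ possible patterns across the places in $S$, which is where the $4^s$ factor in the statement enters. Within one such pattern, the sum in (1) is concentrated on a prescribed subset $T\subseteq S$ of ``large-contribution'' places and, after applying a pigeonhole on $T$, yields a single-place inequality $d_{v_0}\log^+|x-\beta_{v_0}|_{v_0}^{-1}\geq \mu' h(x)-M'$ for some $v_0\in T$ and some exponent $\mu'$ strictly greater than $2$, which is possible thanks to the slack $\mu-2>0$.

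Next I would invoke the quantitative Roth bound on this single-place inequality. Condition (2) translates directly into the large-height hypothesis of the classical result, uniformly in the choice of $v$ and of $\beta_v\in\Upsilon$; this is what enables one to take single constants $r_1$ and $r_2$ depending only on $[K:\mathbb{Q}]$, $\#\Upsilon$ and $\mu$. Summing $r_1$ solutions per pattern over all $4^s$ patterns produces the stated bound of $4^sr_1$.

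The hard part is arranging the combinatorial decomposition so that no data depending on $S$, on $\beta$, or on $M$ leaks into $r_1$ or $r_2$, and so that the resulting single-place inequality still has exponent strictly greater than $2$. This forces the slicing to be coarse---hence the base $4$ rather than something depending on $\mu$---and it forces the large-height hypothesis in (2) to involve $\max_{v\in S} h(\beta_v)$ rather than $\sum_{v\in S} h(\beta_v)$, which is precisely the shape adopted in the statement.
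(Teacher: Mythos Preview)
The paper does not prove this lemma: it is quoted as Theorem~10 of Hsia--Silverman~[3] and used as a black box, with a pointer to~[2] for effective constants. So there is no argument in the paper to compare your sketch against.

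Your sketch, however, has a genuine gap at the pigeonhole step. From
\[
\sum_{v\in T} d_v\log^+|x-\beta_v|_v^{-1}\ \ge\ \mu\, h(x)-M,
\]
pigeonhole over $T$ only produces a single place $v_0$ with
\[
d_{v_0}\log^+|x-\beta_{v_0}|_{v_0}^{-1}\ \ge\ \frac{\mu\, h(x)-M}{|T|},
\]
i.e.\ an effective exponent $\mu'\approx \mu/|T|$. Since $|T|$ may be as large as $s=\#S$ and the hypothesis is only $\mu>2$, this $\mu'$ need not exceed $2$ once $s\ge 2$; the slack $\mu-2>0$ does not survive division by $|T|$. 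Multi-place Roth is not a corollary of single-place Roth via pigeonhole: the quantitative versions (Davenport--Roth, Bombieri--van der Poorten, Evertse, Gross) all run the Thue--Siegel--Roth machinery with the places entering the auxiliary-polynomial construction and the index argument simultaneously. The exponential-in-$s$ factor in those results comes from a gap/counting principle layered on top of that genuinely $S$-adic construction, not from a reduction to one valuation. If you want to keep the slicing idea, you may still partition solutions by patterns to normalise the data, but within each pattern you must invoke a multi-place quantitative Roth theorem rather than a single-place one.
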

For effective bounds on this Theorem, we refer to [2].
To end this section let again be $\mathcal{F}= \{\phi_1,..., \phi_k \}$ such that each $\phi_j: \mathbb{P}^1 \rightarrow \mathbb{P}^1$ is a rational function of degree $d_j \geq 2$ defined over $K$.
For $J, W$, and $w$ as in (2.1), we have $\Phi_w:=(\phi_{w_j})_{j=1}^\infty$ and $\Phi_w^{(n)}=\phi_{w_n} \circ ... \circ \phi_{w_1}$ with $\Phi_w^{(0)}=$Id.

We fix $w$ and denote $\Phi:= \Phi_w, \Phi^{n}:=\Phi_w^{(n)}$ by simplicity.

Now, let $P \in \mathbb{P}^1(K)$ be a point whose $\Phi$-orbit does not have any periodic points within it, namely, that $\Phi^{n}(P) \neq \Phi^{m}(P)$ for all $n \neq m$. Then using well known facts such as the multiplicativity of the ramification index, and the formula 
\begin{center}
 $\sum\limits_P e_P(f) = \deg f -2$ 
\end{center} for rational functions $f$, we can compute that
\begin{align*}
e_P(\Phi^{m})&=e_P(\phi_{w_m}\circ...\circ \phi_{w_1})\\ &=e_P(\phi_{w_1})e_{\phi_{w_1}(P)}(\phi_{w_2})...e_{\phi_{w_{m-1}}(...(\phi_{w_1}(P)))}(\phi_{w_m})\\ &= e_P(\phi_{w_1})e_{\Phi^1(P)}(\phi_{w_2})...e_{\Phi^{m-1}(P)}(\phi_{w_m})
\\ &= e_1e_2...e_m, 
\end{align*}

where we make \begin{equation}
               e_i:= e_{\phi_{w_{i-1}}(...(\phi_{w_1}(P)))}(\phi_{w_i})=e_{\Phi^{i-1}(P)}(\phi_{w_i}).
\end{equation} Therefore

 \begin{align*}e_P(\Phi^{m})=e_1e_2...e_m & \leq \left(\dfrac{e_1 +...+ e_m}{m}\right)^m \\ & = \left(\dfrac{(e_1 -1) +...+ (e_m-1)}{m} + 1\right)^m \\ & \leq \left(\dfrac{\sum_{i \leq k}(2d_i -2)}{m} + 1\right)\\ & \leq e^{\sum_{i \leq k}(2d_i -2)}
 =M= M\left(\dfrac{1}{d_1}d_1\right)^m. \end{align*}
 Hence, generalizing a result for just one function, we have just easily proved the following
 
 \begin{Lemma}
  Let $P \in \mathbb{P}^1(K)$ be a point whose $\Phi$-orbit does not repeat points, then there exist two positive contants $\kappa_1 >0$ and $ 0 < \kappa_2<1$ depending only on the degrees of the functions of $\mathcal{F}= \{\phi_1,..., \phi_k \}$
  such that \begin{center}
             $e_P(\Phi^m) \leq \kappa_1 (\kappa_2d_1)^m$ for any $m \geq 0$.
            \end{center}
\end{Lemma}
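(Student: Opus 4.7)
The plan is to expand $e_P(\Phi^m)$ as a product of local ramification indices along the orbit, bound this product by an exponential via AM--GM, and then use a Riemann--Hurwitz input to control the exponent in terms of the degrees $d_1,\ldots,d_k$ alone.

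First, using the multiplicativity relation $e_P(g\circ f)=e_P(f)\,e_{f(P)}(g)$ (already recalled in the excerpt), I will write $e_P(\Phi^m)=\prod_{i=1}^m e_i$ with $e_i:=e_{\Phi^{i-1}(P)}(\phi_{w_i})$. Since each $e_i\geq 1$, AM--GM bounds this product by $\bigl(1+\tfrac{1}{m}\sum_{i=1}^m(e_i-1)\bigr)^m$, so everything reduces to estimating $\sum_{i=1}^m (e_i-1)$ uniformly in $m$.

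The crux is this uniform bound. I will group the indices by which map is applied: set $I_j=\{i:w_i=j\}$, so that $\{1,\ldots,m\}=\bigsqcup_{j=1}^k I_j$. For $i\in I_j$ the quantity $e_i-1$ is the ramification excess of the \emph{fixed} map $\phi_j$ at the orbit point $\Phi^{i-1}(P)$. The no-repetition hypothesis guarantees that these orbit points are pairwise distinct as $i$ runs over $I_j$, so the classical identity $\sum_{Q\in\mathbb{P}^1}(e_Q(\phi_j)-1)=2d_j-2$ yields $\sum_{i\in I_j}(e_i-1)\leq 2d_j-2$. Summing over $j$ gives $\sum_{i=1}^m(e_i-1)\leq C:=\sum_{j=1}^k(2d_j-2)$, a constant depending only on the degrees.

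Putting it together, $e_P(\Phi^m)\leq(1+C/m)^m\leq e^C$. Setting $\kappa_1:=e^C$ and $\kappa_2:=1/d_1$, which lies in $(0,1)$ since $d_1\geq 2$, we have $(\kappa_2 d_1)^m=1$ and the claimed bound follows. The main obstacle is precisely the Riemann--Hurwitz step: without the no-repetition hypothesis, distinct $i,i'\in I_j$ could give the same orbit point $\Phi^{i-1}(P)=\Phi^{i'-1}(P)$, producing double-counting against the fixed ramification budget $2d_j-2$ and collapsing the argument. The hypothesis is used exactly to rule this out.
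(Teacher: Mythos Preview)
Your proof is correct and follows essentially the same route as the paper: expand $e_P(\Phi^m)$ via multiplicativity, apply AM--GM, bound $\sum_i(e_i-1)$ by $\sum_{j=1}^k(2d_j-2)$ using Riemann--Hurwitz together with the no-repetition hypothesis, and then take $\kappa_1=e^C$, $\kappa_2=1/d_1$. Your explicit grouping $I_j=\{i:w_i=j\}$ makes the use of the hypothesis (distinctness of the points fed into each fixed $\phi_j$) clearer than the paper's more telegraphic presentation, but the argument is the same.
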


  It is possible, under some conditions on the system $\mathcal{F}$, to prove this kind of result for any $P$, replacing $d_1$ by $\deg \Phi^m$ as follows:
  \begin{Lemma}
  Suppose that the $\Phi$-orbit of $P$ does not repeat points or otherwise that no point in its $\Phi$-orbit is totally ramified for any $\phi_j$ in $\mathcal{F}$. Then there exist two positive constants $\kappa_1 >0$ and $ 0 < \kappa_2<1$ depending only on the degrees of the functions belonging to $\mathcal{F}= \{\phi_1,..., \phi_k \}$
  such that \begin{center}
             $e_P(\Phi^m) \leq \kappa_1 \kappa_2^m \deg \Phi^m$ for any $m \geq 0$.
            \end{center}
\end{Lemma}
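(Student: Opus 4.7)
The plan is to treat the two alternative hypotheses separately, since each delivers the desired exponential decay by a different mechanism, and then to consolidate the two bounds under a single pair of constants depending only on $d_1,\ldots,d_k$.

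In the first case -- the $\Phi$-orbit of $P$ consists of pairwise distinct points -- the computation preceding the previous Lemma already produces an absolute constant bound $e_P(\Phi^m) \leq \kappa_1' := e^{\sum_{i \leq k}(2 d_i - 2)}$ that does not grow with $m$. To recast this into the multiplicative form required by the statement, I will use that $d_j \geq 2$ for every $j$, so that $\deg \Phi^m = \prod_{i=1}^m d_{w_i} \geq 2^m$. Consequently
\begin{equation*}
e_P(\Phi^m) \;\leq\; \kappa_1' \;=\; \kappa_1' \cdot \bigl(\tfrac{1}{2}\bigr)^m \cdot 2^m \;\leq\; \kappa_1' \cdot \bigl(\tfrac{1}{2}\bigr)^m \cdot \deg \Phi^m,
\end{equation*}
which is the bound sought, with constants $\kappa_1'$ and $1/2$.

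In the second case -- no point of the $\Phi$-orbit is totally ramified for any $\phi_j \in \mathcal{F}$ -- I will apply the multiplicativity of the ramification index exactly as displayed in (2.2) to write $e_P(\Phi^m) = e_1 e_2 \cdots e_m$, where $e_i = e_{\Phi^{i-1}(P)}(\phi_{w_i})$. The hypothesis forces $e_i \leq d_{w_i}-1$ for every $i$, and therefore
\begin{equation*}
e_P(\Phi^m) \;\leq\; \prod_{i=1}^m (d_{w_i}-1) \;=\; \deg \Phi^m \cdot \prod_{i=1}^m \Bigl(1 - \tfrac{1}{d_{w_i}}\Bigr) \;\leq\; \Bigl(1 - \tfrac{1}{d_k}\Bigr)^m \deg \Phi^m,
\end{equation*}
where the last step uses that $d \mapsto 1 - 1/d$ is increasing and $d_k = \max_j d_j$. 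This yields the bound with constants $1$ and $1 - 1/d_k \in (0,1)$.

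To finish I will set $\kappa_1 := \max(\kappa_1', 1)$ and $\kappa_2 := \max(1/2,\, 1 - 1/d_k)$, both of which lie in $(0,1)$ (for $\kappa_2$) respectively depend only on the degrees of the members of $\mathcal{F}$, and both alternatives of the hypothesis then produce $e_P(\Phi^m) \leq \kappa_1 \kappa_2^m \deg \Phi^m$ uniformly in $m$. No step here presents a genuine obstacle; the only mild subtlety is the repackaging in the first case, where an $m$-uniform bound must be absorbed into the multiplicative form $\kappa_1 \kappa_2^m \deg \Phi^m$, which is possible precisely because the degrees are bounded below by $2$.
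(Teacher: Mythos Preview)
Your proof is correct and follows essentially the same approach as the paper. In the second case your computation is identical to the paper's; in the first case the paper simply cites the previous lemma (which gives $e_P(\Phi^m)\le \kappa_1(\kappa_2 d_1)^m$ with $\kappa_2=1/d_1$) and implicitly uses $d_1^m\le \deg\Phi^m$, whereas you spell out the repackaging via $\deg\Phi^m\ge 2^m$ --- a cosmetic difference only.
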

\begin{proof}
 Previous lemma deals with the situation without repeated poins on orbits, so we work out the second situation. Using the notation (4.1), the ramification hypothesis implies that $e_i \leq d_{i_i}-1$ for each $i$. Therefore
 \begin{align*}
  e_P(\Phi^{m})=e_1e_2...e_m & \leq  \prod_{j \leq m}(d_{i_j}-1) \\ & \leq \prod_{j \leq m}\left(1 - \dfrac{1}{d_{i_j}}\right)\prod_{j \leq m} d_{i_j} \\ & \leq \left(1 - \dfrac{1}{\max_i d_i} \right)^m \prod_{j \leq m} d_{i_j} \\ &=\left(1 - \dfrac{1}{\max_i d_i} \right)^m \deg \Phi^m,
 \end{align*} which is as desired with $\kappa_1= 1, \kappa_2=\left(1 - \dfrac{1}{\max_i d_i} \right)$.

\end{proof}

\section{a bound for the number of quasiintegral points in an orbit}
In this section, we show explicit bounds for the number of $S$-integral points in a given orbit of a wandering point for a dynamical system of rational functions
extending previous work by Hsia and Silverman [3]. 

It was first showed by J. Silverman [1] that
orbits of this kind have only a finite number of $S$-integers, which we recall below.
\begin{Th}(Theorem 2.1, [7])
 Let $R_S$ be the ring of $S$-integers of $K$, and let 
 $\mathcal{F}=\{\phi_1,...,\phi_k \}$ be a set of rational functions of degree at least two defined over $K$. Let $\mathcal{O}_{\mathcal{F}}(P)$ be the orbit of $P$ under the semigroup generated by $\mathcal{F}$. Assume that no map in the semigroup is totally ramified in its fixed points. Then for any function $z
\in \mathbb{P}^1$, the set
\begin{center}
 $\{ Q \in \mathbb{P}^1(K) | Q \in \mathcal{O}_{\mathcal{F}}(P) $ and $ z(Q) \in R_S\}$
 \end{center} is finite.
\end{Th}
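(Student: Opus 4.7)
The plan is to argue by contradiction in the spirit of Hsia--Silverman, using the distance estimate (Lemma 4.1), the orbit-level ramification decay (Lemmas 4.3--4.4), the canonical-height estimates (Lemma 3.4), and the quantitative Roth theorem (Lemma 4.2). After a linear fractional change of coordinates I may assume $z$ is the affine coordinate at $\infty$, so $S$-integrality of $Q \in K$ is equivalent to $\sum_{v \in S} d_v \lambda_v(Q,\infty) \geq h(Q) - O(1)$. Suppose for contradiction that infinitely many points $Q \in \mathcal{O}_{\mathcal{F}}(P)$ are $S$-integers; then the orbit is infinite. Viewing it as the set of vertices of the rooted $k$-ary word-tree of $\mathcal{F}$, König's lemma yields an infinite branch $\Phi = (\phi_{w_i})_{i=1}^\infty$ with infinitely many $S$-integral iterates $Q_n := \Phi^{(n)}(P)$; in particular this $\Phi$-orbit is non-repeating.

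For $n \geq k$, write $Q_n = \psi_n(Q_{n-k})$ with $\psi_n := \phi_{w_n}\circ\cdots\circ\phi_{w_{n-k+1}} \in \mathcal{F}_k$. Since $|\mathcal{F}_k|$ is finite, pigeonhole gives a fixed $\psi \in \mathcal{F}_k$ and an infinite subsequence of indices $n$ on which $\psi_n = \psi$. Applying Lemma 4.1 with $\psi$, $A = \infty$, and base point $Q_{n-k}$ produces
\[
\sum_{v \in S} \max_{A' \in \psi^{-1}(\infty)} e_{A'}(\psi)\, d_v \lambda_v(Q_{n-k}, A') \geq \sum_{v \in S} d_v \lambda_v(Q_n, \infty) - O(h(\psi)+1).
\]
Combining with the $S$-integrality of $Q_n$, the canonical-height growth $h(Q_n) \geq (\deg \psi) h(Q_{n-k}) - O(h(\psi))$ from Lemma 3.4, and a ramification bound of the form $\max_{A'}e_{A'}(\psi) \leq \kappa_1(\kappa_2 d_1)^k$ with $\kappa_2 \in (0,1)$ (the analogue of Lemmas 4.3--4.4 for the chain of preimages, where the fixed-point hypothesis intervenes), one obtains for each $n$ preimages $A'_{n,v} \in \psi^{-1}(\infty)$ satisfying
\[
\sum_{v \in S} d_v \lambda_v(Q_{n-k}, A'_{n,v}) \geq \mu\, h(Q_{n-k}) - M,
\]
where $\mu := \kappa_1^{-1}\kappa_2^{-k}$ and $M$ is independent of $n$. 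Choose $k$ so large that $\mu > 2 + \varepsilon$, and use Lemma 3.2 to convert the chordal approximation into the shape $\sum_v d_v \log^+ |Q_{n-k} - A'_{n,v}|_v^{-1}$ required by Roth (handling $A'_{n,v} = \infty$ separately, which is where the hypothesis that $\infty$ is not a totally ramified fixed point of $\psi$ is used).

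Finally, apply Lemma 4.2 with the finite $G_{\bar K/K}$-invariant set $\Upsilon = \psi^{-1}(\infty)$, exponent $\mu > 2$, and constant $M$. Condition (2) holds for all sufficiently large $n$ because $h(Q_{n-k}) \to \infty$ along the infinite non-repeating $\Phi$-orbit, while $\max_v h(A'_{n,v})$ stays uniformly bounded (the $A'_{n,v}$ coming from a fixed finite set). Hence infinitely many distinct $Q_{n-k} \in K$ satisfy both conditions of Roth's theorem, contradicting the finite bound $4^s r_1$. The main obstacle is extracting the ramification bound for $\max_{A'} e_{A'}(\psi)$ from the fixed-point hypothesis alone: the hypothesis directly controls only ramification at fixed points of semigroup elements, so one must trace through the chain of preimages in the factorization $\psi = \phi_{w_n}\circ\cdots\circ\phi_{w_{n-k+1}}$ and rule out pathological accumulation of ramification along the pull-back of $\infty$. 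This combinatorial step is the technical heart of Silverman's argument in [7]; a secondary technical point is the conversion between the logarithmic chordal metric $\lambda_v$ and the $\log^+|\cdot|_v^{-1}$ approximation demanded by Lemma 4.2, which is handled by Lemma 3.2 with a case analysis for approximations centered at $\infty$.
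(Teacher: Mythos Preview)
The paper does not supply its own proof of this statement: Theorem 5.1 is quoted from Silverman [7] as background, and the paper instead proves the quantitative Theorem 5.2 (for a single fixed branch $\Phi$) and then deduces full-orbit finiteness in Corollary 5.5 under a \emph{different} hypothesis (on the $\mathcal{F}$-orbit of $\infty$ rather than on fixed points of semigroup elements). In particular, the paper's passage from a single branch to the full orbit does not proceed by extracting one branch; it obtains a bound on the depth $n$ of any $S$-integral iterate that is \emph{uniform over all sequences} $\Phi$, after which only finitely many words of length $\le n$ remain.

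Your König step is a genuine gap. Infinitely many $S$-integral points in $\mathcal{O}_{\mathcal{F}}(P)$ yield infinitely many marked vertices in the $k$-ary word tree, and König then produces an infinite path each of whose vertices has a marked \emph{descendant}; it does not follow that infinitely many vertices \emph{on the path itself} are marked. Concretely, with $k=2$ the marked words could be exactly $1,21,221,2221,\ldots$: every infinite word then has at most one marked prefix, so no branch $\Phi$ carries infinitely many $S$-integral iterates, and your single-branch Roth argument never gets off the ground. Nothing in the fixed-point hypothesis excludes this pattern a priori. The remedy is precisely the uniform-in-$\Phi$ depth bound that the paper pursues; that in turn requires the ramification estimate at preimages $A'\in\psi^{-1}(\infty)$ uniformly over $\psi\in\mathcal{F}_m$. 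You rightly flag this as the technical heart and defer it to [7], but note that Lemmas 4.3--4.4 here treat \emph{forward} orbits under the paper's own hypotheses and do not directly deliver the needed preimage bound from the fixed-point assumption.
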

The next quantitative theorem generalizes a theorem of Hsia and Silverman to a semigroup situation.
\begin{Th}
 Let $\mathcal{F}= \{ \phi_1, \phi_2,...,\phi_k \} \subset K(z)$ be a set of rational maps of respective degrees $2 \leq d_1 \leq d_2 \leq ... \leq d_k$. We fix a sequence $\Phi=(\phi_{i_j})_{j=1}^\infty$ of functions in $\mathcal{F}$, 
 with $\Phi^n=\phi_{i_n}\circ...\circ \phi_{i_1} \in \mathcal{F}_n$, and $P \in \mathbb{P}^1(K)$ not preperiodic for $\Phi$. Fix $A \in \mathbb{P}^1(K)$ such that no two points in the $\Phi$-orbit of $A$ coincide, or otherwise tha no point in its orbit is totally ramified for any map in $\mathcal{F}$.
 For any finite set of places $S \subset M_K$ and any constant $1 \geq \varepsilon >0$, define a set of nonnegative integers by
\begin{center}
 $\Gamma_{\Phi,S}(A, P, \varepsilon) := \{ n \geq 0 : \sum_{v \in S} d_v \lambda_v (\Phi^n(P), A) \geq \varepsilon \hat{h}_{S^n(\Phi)}(\Phi^n (P)) \}$.
 \end{center}
 (a) There exist effective constants
 \begin{center}
  $\gamma_1= \gamma_2(d_1,...,d_k, \varepsilon, [K: \mathbb{Q}])$ and $\gamma_2= \gamma_2(d_1,...,d_k, \varepsilon, [K: \mathbb{Q}])$
 \end{center} such that
 \begin{center}
  $\# \left\{n \in \Gamma_{\Phi,S}(A, P, \varepsilon) : n >  \gamma_1 + \log_{d_1}^+ \left(\dfrac{\hat{h}_{\mathcal{F}}(A)+h(\mathcal{F})}{\hat{h}_{\Phi}(P)}\right)\right\} \leq 4^{\#S}\gamma_2$.
 \end{center}
 In particular, there is an effective constant $\gamma_3(d_1,...,d_k,\varepsilon, [K: \mathbb{Q}])$ such that 
 \begin{center}
  $\# \Gamma_{\Phi,S}(A, P, \epsilon) \leq 4^{\# S}\gamma_3 + \log_{d_1}^+ \left(\dfrac{\hat{h}_{\mathcal{F}}(A)+h(\mathcal{F})}{\hat{h}_{\Phi}(P)}\right) $
 \end{center}
 (b) If $P$ is not $\Phi$-preperiodic for each $\Phi$,  there is a constant $\gamma_3(K, S, \mathcal{F}, A, \epsilon)$ that is independent of $P$ and of the sequence $\Phi$ chosen from $\mathcal{F}$ such that
\begin{center}
 $\max_{\Phi,P} \Gamma_{\Phi,S}(A, P, \varepsilon) \leq \gamma_4$.
\end{center}

\begin{proof}
 For simplicity, we write $\Gamma_{S}(\varepsilon)$ instead of $\Gamma_{\Phi,S}(A, P, \varepsilon)$.
Taking $\kappa_1$ and $\kappa_2 < 1$ the constants from Lemmas 4.3 and 4.4, we choose $m \geq 1$ minimal such that
$\kappa^m_2 \leq \epsilon/5\kappa_1$. Then $\kappa_1, \kappa_2$ and $ m$ depend only on $d_1,.., d_k$ and on $\varepsilon$. 

If $n \leq m$ for all $n \in \Gamma_{S}(\varepsilon)$, then
\begin{center}
 $\# \Gamma_{S}(\epsilon) \leq m \leq \dfrac{\log (5\kappa_1) + \log (\varepsilon^{-1})}{\log (\kappa_2^{-1})} + 1$,
\end{center} which is in the desired form.
If there is an $n \in \Gamma_{S}(\varepsilon)$ such that $n > m$, we fix $n$ for instance. Then by definition of $\Gamma_{S}(\varepsilon)$ we have
\begin{equation}
 \epsilon \hat{h}_{S^n(\Phi)}(\Phi^n (P)) \leq  \sum_{v \in S} d_v \lambda_v (\Phi^n(P), A).
\end{equation}
We can write $\Phi^n= \psi \circ \Phi^{n-m}$ for $\psi= \phi_{i_n}\circ ... \circ \phi_{i_{n-m +1}} \in \mathcal{F}_m$.
\newline \newline For our chosen $m$, we denote
\begin{center}
 $\textbf{e}_m:= \max\limits_{A^{\prime} \in \psi^{-1}(A)} e_{A^{\prime}}(\psi)$.
\end{center} By Lemma 4.4 and our choice of $m$, we notice that
\begin{center}
 $\textbf{e}_m \leq \kappa_1 (\kappa_2)^m \deg \psi \leq \epsilon \deg \psi/5$
\end{center}

Therefore, Lemma 4.1 yields, for $Q \in \mathbb{P}^1(K)$ and $\psi \in \mathcal{F}_m$, that
\begin{equation}
 \sum_{v \in S} d_v \lambda_v ( \psi(Q),A) - O(h(A) + h(\psi) + 1)   \leq \textbf{e}_m \sum_{v\in S} \max\limits_{A^{\prime} \in \psi^{-1}(A)} d_v \lambda_v(Q, A^{\prime}).
\end{equation}
Gathering (5.1) and (5.2) with $Q:= \Phi^{n-m}(P)$, we obtain that
\begin{center}
 $\epsilon \hat{h}_{S^n(\Phi)}(\Phi^n (P)) \leq \textbf{e}_m \sum_{v\in S} \max\limits_{A^{\prime} \in \psi^{-1}(A)} d_v \lambda_v(\Phi^{n-m}(P), A^{\prime}) + O(h(A) + h(\mathcal{F}_m) + 1)$,
\end{center} where the involved constants depend only on the degree of the functions in $\mathcal{F}_m$, and so on $d_1,...d_k$ and on $\epsilon$.

For each $v \in S$, we choose $A_v^{\prime} \in \psi^{-1}(A)$ such that
\begin{center}
 $ \lambda_v(\Phi^{n-m}(P), A_v^{\prime})=\max\limits_{A^{\prime} \in \psi^{-1}(A)}  \lambda_v(\Phi^{n-m}(P), A^{\prime})$,
\end{center} so that
\begin{center}
 $\epsilon \hat{h}_{S^n(\Phi)}(\Phi^n (P)) \leq \textbf{e}_m \sum_{v\in S} d_v \lambda_v(\Phi^{n-m}(P), A_v^{\prime}) + O(h(A) + h(\mathcal{F}_m) + 1)$.
\end{center}
For instance, we can assume that $z(A^{\prime}) \neq \infty$ for all $A^{\prime} \in \Psi^{-1}(A), \Psi \in \mathcal{F}_m$. If this is not the case, we use $z$ for some of the $A^{\prime}$ and $z^{-1}$ for the others.

Let $S^{\prime} \subset S$ be the set of places in $S$ defined by
\begin{center} 
 $S^{\prime}= \{ v \in S ; \lambda_v(\Phi^{n-m}(P), A_v^{\prime}) > \lambda_v (A_v^{\prime}, \infty)+ \log l_v \}$, 
\end{center} where again $l_v=2$ for $v$ archimedean and $l_v=1$ otherwise. 

Set $S^{\prime \prime}:= S - S^{\prime}$. Applying Lemma 3.2 to the places in $S^{\prime}$ and using the definition of $S^{\prime \prime}$ we find that
\begin{align*}
 \epsilon \hat{h}_{S^n(\Phi)}(\Phi^n (P))& \leq \textbf{e}_m \displaystyle\sum\limits_{v\in S}
 d_v \lambda_v(\Phi^{n-m}(P), A_v^{\prime}) + O(h(A) + h(\mathcal{F}_m) + 1)\\
&\leq \textbf{e}_m \displaystyle\sum\limits_{v\in S^{\prime}} d_v(2 \lambda_v(A_v^{\prime},\infty)-\log|z(\Phi^{n-m}(P))- z(A_v^{\prime})|+ \log l_v)\\
& \qquad \qquad 
 +\textbf{e}_m \displaystyle\sum\limits_{v\in S^{\prime \prime}} d_v(\lambda_v(A_v^{\prime},\infty) + \log l_v) + O(h(A)+ h (\mathcal{F}_m) + 1)\\
 &\leq \textbf{e}_m \displaystyle\sum\limits_{v\in S^{\prime}} d_v \log|z(\Phi^{n-m}(P))- z(A_v^{\prime})|^{-1}\\
 & \qquad \qquad + \textbf{e}_m \displaystyle\sum\limits_{v\in S} d_v(2\lambda_v(A_v^{\prime},\infty) + \log l_v) + O(h(A)+ h (\mathcal{F}_m) + 1).
\end{align*}
Now using Lemma 2.3 and Lemma 3.5 it can be checked that
\begin{align*}
\displaystyle\sum\limits_{v\in S} d_v \lambda_v(A_v^{\prime},\infty) & \leq \displaystyle\sum\limits_{A^{\prime} \in \psi^{-1}(A)} \displaystyle\sum\limits_{v\in S} d_v \lambda_v(A^{\prime},\infty)\\
& \leq \displaystyle\sum\limits_{A^{\prime} \in \psi^{-1}(A)} h(A^{\prime})\\ &\leq \displaystyle\sum\limits_{A^{\prime} \in \psi^{-1}(A)} \hat{h}_{S^{n-m}(\Phi)}(A^{\prime}) + O(h(\mathcal{F})+1)\\ 
 &=\displaystyle\sum\limits_{A^{\prime} \in \psi^{-1}(A)} (\deg \psi)^{-1} \hat{h}_{S^m(S^{n-m}(\Phi))}(\psi(A^{\prime})) + O(h(\mathcal{F})+1)\\
&\leq \displaystyle\sum\limits_{A^{\prime} \in \psi^{-1}(A)} (\deg \psi)^{-1} \hat{h}_{S^n(\Phi)}(A) + O(h(\mathcal{F})+1)\\
&\leq \hat{h}_{S^n(\Phi)}(A) + O(h(\mathcal{F})+1)\\
&\leq \hat{h}_{\mathcal{F}}(A) + O(h(\mathcal{F}) + 1). 
\end{align*}
The constants depend only on $m$ and $d_1,...,d_k$.

Further, from the definition of $l_v$, we have 
\begin{center}
 $\sum_{v \in S}d_v \log l_v \leq \log 2$.
\end{center}
Also, from Proposition 3.3 it follows that $h(\mathcal{F}_m) = O(h(\mathcal{F}) +1)$.

All the inequalities above together imply that
\begin{center}
 $\epsilon (\hat{h}_{S^n(\Phi)}(\Phi^n (P)) \leq \textbf{e}_m (\displaystyle\sum\limits_{v\in S^{\prime}} d_v \log|z(\Phi^{n-m}(P))- z(A_v^{\prime})|^{-1})+ O(\hat{h}_{\mathcal{F}}(A) +h(\mathcal{F}) + 1)$.
\end{center}
Let us set some definitions in order to apply Roth's theorem. We define
\begin{equation}
 \Upsilon = \{z(A^{\prime}) : A^{\prime} \in \psi^{-1}(A) \} \subset \bar{K},
\end{equation} which is $G_{\bar{K} /K}$-invariant and $\# \Upsilon \leq d_k^m$. We define the map $\beta: S^{\prime} \rightarrow \Upsilon$ by $\beta_v:= A_v^{\prime}$ and analyze the points
$x = \Phi^{n-m}(P)$ for $n \in \Gamma_S(\epsilon)$. Applying Lemma 3.2 for the set of places $S^{\prime}, M=0$ and $\mu=5/2$, yields that there exist constants $r_1, r_2$ depending only on $[K: \mathbb{Q}], d_1,..., d_k$ and $\epsilon$ such that the set of $n \in \Gamma_S(\epsilon)$ with $n>m$ can be written as a union
\begin{center}
 $\{ n \in \Gamma_S(\epsilon) : n>m \}= T_1 \cup T_2 \cup T_3$
\end{center} such that \newline \newline
$ \# T_1 \leq 4^{\# S^{\prime}}r_1, \newline \newline
T_2 = \{ n >m : \displaystyle\sum\limits_{v\in S^{\prime}} d_v \log|z(\Phi^{n-m}(P))- z(A_v^{\prime})|^{-1} \leq \frac{5}{2} h(\Phi^{n-m}(P)) \}, \newline \newline
T_3 = \{ n >m : h(\Phi^{n-m}(P)) \leq r_2 \max_{v \in S^{\prime}} \{h(A_v^{\prime},1) \} \}$.
\newline \newline
We already have a bound for the size of $T_1$. For $T_3$, we use again Lemmas 2.3 and 3.5 to compute
\begin{align*}
h(A_v^{\prime}) &\leq \hat{h}_{S^{n-m}(\Phi)}(A_v^{\prime}) + c_3h(\mathcal{F}) + c_4
= (\deg \psi)^{-1}\hat{h}_{S^m(S^{n-m}(\Phi))}(A) + c_3h(\mathcal{F}) + c_4\\ 
&=(\deg \psi)^{-1}\hat{h}_{S^n(\Phi)}(A) + c_3h(\mathcal{F}) + c_4\\ 
 &\leq c_5 \hat{h}_{\mathcal{F}}(A)+ c_3h(\mathcal{F}) + c_4,
\end{align*}
 and \begin{center}
$h(\Phi^{n-m}(P)) \geq \hat{h}_{S^{n-m}(\Phi)}(\Phi^{n-m}(P)) -c_3h(\mathcal{F}) -c_4=\deg (\Phi^{n-m)})\hat{h}_{\Phi}(P)-c_3h(\mathcal{F}) -c_4$.
 \end{center}
 Hence
\begin{center}
 $T_3 \subset \{n >m : d_1^{n-m}\hat{h}_{\Phi}(P) \leq c_5 \hat{h}_{\mathcal{F}}(A)+ c_3h(\mathcal{F}) + c_4  \}$,
\end{center} so every $n \in T_3$ satisfies
\begin{center}
 $n \leq m + \log_{d_1}^+\left(\dfrac{c_5 \hat{h}_{\mathcal{F}}(A)+ c_6h(\mathcal{F}) + c_7}{\hat{h}_{\Phi}(P)}\right) \leq c_8 + \log_{d_1}^+\left(\dfrac{\hat{h}_{\mathcal{F}}(A)+h(\mathcal{F})}{\hat{h}_{\Phi}(P)}\right)$.
\end{center}
Finally, we consider the set $T_2$. Again using Lemmas 2.3 and 3.5 we derive
\begin{center}
 $h(\Phi^{n-m}(P)) \leq \hat{h}_{S^{n-m}(\Phi)}(\Phi^{n-m}(P)) + c_3 h(\mathcal{F}) + c_4\newline \newline =\deg(\Phi^{n-m})\hat{h}_{\Phi}(P)+ c_3 h(\mathcal{F}) + c_4$,
\end{center}
and then, for $n \in T_2$, using that $\textbf{e}_m \leq \epsilon \deg \psi /5$
\begin{align*}
 \epsilon \hat{h}_{S^n(\Phi)}(\Phi^n(P))&=\epsilon \deg (\Phi^n) \hat{h}_{\Phi}(P)\\
 &\leq \textbf{e}_m (\displaystyle\sum\limits_{v\in S^{\prime}} d_v \log|z(\Phi^{n-m}(P))- z(A_v^{\prime})|^{-1})+ c_9(\hat{h}_{\mathcal{F}}(A) +h(\mathcal{F}) + 1)\\
 &\leq (\epsilon \frac{\deg \psi}{5})\frac{5}{2}\deg(\Phi^{n-m})\hat{h}_{\Phi}(P)+ c_{10}(\hat{h}_{\mathcal{F}}(A)+ h(\mathcal{F}) + 1)\\
  &= \frac{\epsilon}{2}\deg(\Phi^n)\hat{h}_{\Phi}(P)+ c_{10}(\hat{h}_{\mathcal{F}}(A) +h(\mathcal{F}) + 1).
  \end{align*} Thus \newline \newline
  $\frac{\epsilon}{2}\deg(\Phi^n)\hat{h}_{\Phi}(P) \leq c_{10}(\hat{h}_{\mathcal{F}}(A) +h(\mathcal{F}) + 1)$, which implies that
  \begin{center}
   $\frac{\epsilon}{2}d_1^n \hat{h}_{\Phi}(P) \leq c_{10}(\hat{h}_{\mathcal{F}}(A) +h(\mathcal{F}) + 1)$,
  \end{center} equivalent to
  \begin{center}
   $n \leq c_{11}+ \log_{d_1}^+ \left(\dfrac{\hat{h}_{\mathcal{F}}(A)+h(\mathcal{F})}{\hat{h}_{\Phi}(P)}\right)$.
  \end{center}
  We observe that the set $\Upsilon$ defined by (5.3) does not depend on the point, so the largest element in $T_1$ is bounded  independently of $P$.
  We also note that the quantity
  \begin{center}
   $\hat{h}^{\text{min}}_{\mathcal{F},K} := \inf \{\hat{h}_{\Phi}(P) : \Phi $ a sequence of maps in $\mathcal{F}, P \in \mathbb{P}^1(K)$ is not prepriodic for $\Phi  \}$
  \end{center} is strictly positive. Namely, from Lemma 3.5, we know that
  \begin{center}
   $\hat{h}_{\Phi}(P) \leq \hat{h}_{\mathcal{F}}(P) + O(h(\mathcal{F}))$, 
  \end{center} and $O$ does not depend on $\Phi$ generated by $\mathcal{F}$, and neither on $P$.
  
  So if $P_0$ is a $\Phi$-wandering point, $J= \{ 1,..., k\}$ and $W= \prod_{i=1}^\infty J$, then $\hat{h}_{\mathcal{F}}(P)>0$ and
  \begin{center}
   $\hat{h}^{\text{min}}_{\mathcal{F},K} := \inf \{\hat{h}_{\Phi_w}(P) : w \in W, P \in \mathbb{P}^1(K)$ and $0 < \hat{h}_{\Phi_w}(P) \leq \hat{h}_{\mathcal{F}}(P) + O(h(\mathcal{F}))  \}$,
  \end{center}
for this last set is finite by the Northcott property for $\hat{h}_{\mathcal{F}}$, so the infimun is taken over a finite set of positive numbers.

Therefore, $\max(T_1 \cup T_2 \cup T_3)$ can be bounded independently of $P$ and the choice of the sequence $\Phi$ generated by the semigroup $\mathcal{F}$.
\end{proof}
Moreover, one can make the last claim of Theorem 5.2 more precise.
\end{Th}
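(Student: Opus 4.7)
The plan is to reduce the membership condition $n \in \Gamma_{\Phi,S}(A, P, \varepsilon)$ to a quantitative Roth-type inequality for the point $\Phi^{n-m}(P)$ against preimages of $A$ under a composition $\psi$ of length $m$, where $m$ is a single lookahead depending only on $\varepsilon$ and $d_1, \dots, d_k$. Decompose $\Phi^n = \psi \circ \Phi^{n-m}$ with $\psi = \phi_{i_n} \circ \cdots \circ \phi_{i_{n-m+1}} \in \mathcal{F}_m$. Under either geometric hypothesis on $A$, Lemmas 4.3--4.4 applied to $\psi$ give $\mathbf{e}_m := \max_{A' \in \psi^{-1}(A)} e_{A'}(\psi) \leq \kappa_1 \kappa_2^m \deg \psi$, so I choose $m$ minimal with $\kappa_2^m \leq \varepsilon/(5\kappa_1)$, forcing $\mathbf{e}_m \leq \varepsilon \deg(\psi)/5$. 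If $\Gamma_{\Phi,S}(A, P, \varepsilon) \subset [0, m]$ already, the cardinality bound is immediate; otherwise fix some $n > m$ in $\Gamma$.

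With $m$ fixed, apply Silverman's distance inequality (Lemma 4.1) to $\psi$ at $Q = \Phi^{n-m}(P)$, trading $\sum_{v \in S} d_v \lambda_v(\Phi^n(P), A)$ for $\mathbf{e}_m \sum_v d_v \lambda_v(\Phi^{n-m}(P), A'_v)$, where $A'_v \in \psi^{-1}(A)$ maximizes the local chordal distance at $v$, modulo an error $O(h(A) + h(\psi) + 1)$. Split $S = S' \sqcup S''$ by the threshold of Lemma 3.2, and on $S'$ substitute $\log|z(\Phi^{n-m}(P)) - z(A'_v)|_v^{-1}$ for the chordal $\lambda_v$. The residual $\sum_v d_v \lambda_v(A'_v, \infty)$ is bounded by $\hat{h}_{\mathcal{F}}(A) + O(h(\mathcal{F}) + 1)$ through the chain $\sum_v d_v \lambda_v(A', \infty) \leq h(A') \leq \hat{h}_{S^{n-m}(\Phi)}(A') + O(h(\mathcal{F})) = (\deg \psi)^{-1}\hat{h}_{S^n(\Phi)}(A) + O(h(\mathcal{F}))$ via Lemma 2.3(ii) and Lemma 3.5, with Proposition 3.3 controlling $h(\psi)$ in terms of $h(\mathcal{F})$.

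The inequality is now in Roth form. Apply Lemma 4.2 with $x = \Phi^{n-m}(P)$, the $G_{\bar K/K}$-invariant set $\Upsilon = \{z(A') : A' \in \psi^{-1}(A)\}$ of cardinality at most $d_k^m$, $\beta_v = A'_v$, $\mu = 5/2$ and $M = 0$; this splits $\{n \in \Gamma : n > m\} = T_1 \cup T_2 \cup T_3$. The Roth exceptional set $T_1$ contributes the $4^{\#S}\gamma_2$ term. For $T_3$, combine the defining condition $h(\Phi^{n-m}(P)) \leq r_2\max_v h(A'_v)$ with $h(\Phi^{n-m}(P)) \geq d_1^{n-m}\hat{h}_{\Phi}(P) - O(h(\mathcal{F}))$ and $h(A'_v) = O(\hat{h}_{\mathcal{F}}(A) + h(\mathcal{F}))$ to obtain $n = O(1) + \log_{d_1}^+\bigl(\tfrac{\hat{h}_{\mathcal{F}}(A) + h(\mathcal{F})}{\hat{h}_{\Phi}(P)}\bigr)$. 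For $T_2$, plug the Roth-weak inequality $\sum d_v \log|\cdot|^{-1} \leq \tfrac{5}{2} h(\Phi^{n-m}(P))$ into the main estimate: because $\mathbf{e}_m \cdot \tfrac{5}{2} \leq \tfrac{\varepsilon}{2}\deg\psi$, this absorbs half of $\varepsilon\deg(\Phi^n)\hat{h}_{\Phi}(P)$ and leaves $\tfrac{\varepsilon}{2}d_1^n\hat{h}_{\Phi}(P) = O(\hat{h}_{\mathcal{F}}(A) + h(\mathcal{F}) + 1)$, yielding the same logarithmic bound.

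Part (b) follows because $\Upsilon$, $r_1, r_2$, and every additive $O$-constant depend only on $K, S, \mathcal{F}, A, \varepsilon$, not on $(\Phi, P)$; the only pair-dependent quantity in (a) is $\hat{h}_{\Phi}(P)$ in the denominator. Define $\hat{h}^{\min}_{\mathcal{F}, K} := \inf\{\hat{h}_{\Phi}(P) : P \in \mathbb{P}^1(K),\ P \text{ not } \Phi\text{-preperiodic}\}$; by Lemma 3.5 the uniform bound $\hat{h}_{\Phi}(P) \leq \hat{h}_{\mathcal{F}}(P) + O(h(\mathcal{F}))$ shows the infimum is attained on the Northcott-finite set $\{P : \hat{h}_{\mathcal{F}}(P) \leq C\}$ (Lemma 2.2 with $L = \mathcal{O}_{\mathbb{P}^1}(1)$), hence is strictly positive, and substituting into (a) yields a uniform $\gamma_4$. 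The principal obstacle is the uniform bookkeeping of errors: each invocation of Lemma 2.3(ii) or Lemma 3.5 produces an $O(h(\mathcal{F}))$-term that must be absorbed into a single $O(\hat{h}_{\mathcal{F}}(A) + h(\mathcal{F}) + 1)$, and the quantitative match $\mathbf{e}_m \cdot \mu \leq \varepsilon\deg(\psi)/2$ at $\mu = 5/2$ must leave positive slack to defeat the $\varepsilon$ on the left—which is exactly what the choice $\kappa_2^m \leq \varepsilon/(5\kappa_1)$ is designed to ensure.
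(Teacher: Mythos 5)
Your proposal is correct and follows essentially the same route as the paper's proof: the same lookahead decomposition $\Phi^n = \psi \circ \Phi^{n-m}$ with $m$ chosen so that $\mathbf{e}_m \leq \varepsilon \deg(\psi)/5$, the same use of Silverman's distance inequality and the quantitative Roth theorem with $\mu = 5/2$ and $M=0$, the same $T_1 \cup T_2 \cup T_3$ split, and the same Northcott argument via $\hat{h}^{\min}_{\mathcal{F},K}$ for part (b). (You also correctly read the paper's ``Lemma 3.2'' citation at the Roth step as the quantitative Roth statement, i.e.\ Lemma 4.2.)
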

\begin{Prop} Under the conditions and notations of the proof of Theorem 5.2, there exists $\gamma_2$ depending only on $A, \mathcal{F},K, S, \epsilon$ such that
\begin{center}
 $ \max (T_1 \cup T_2 \cup T_3) \leq \gamma_2 + \log_{d_1}^+ \left( \dfrac{\hat{h}_{\mathcal{F}}(A) + h(\mathcal{F})}{\hat{h}_{\Phi}(P)} \right)$.
\end{center}

\end{Prop}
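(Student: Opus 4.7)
The plan is to revisit the tripartite decomposition $\{n \in \Gamma_{\Phi,S}(A,P,\epsilon) : n > m\} = T_1 \cup T_2 \cup T_3$ built in the proof of Theorem 5.2 and to upgrade the treatment of $T_1$. The $T_2$ and $T_3$ pieces already give bounds of the claimed shape, since the proof of Theorem 5.2 shows that every $n \in T_2 \cup T_3$ satisfies
\[
n \leq c_{11} + \log_{d_1}^+\!\Bigl(\frac{\hat{h}_\mathcal{F}(A)+h(\mathcal{F})}{\hat{h}_\Phi(P)}\Bigr)
\]
with $c_{11}$ depending only on $d_1,\dots,d_k,\epsilon,[K:\mathbb{Q}]$, so these can be harmlessly absorbed into $\gamma_2$. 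What remains is to bound $\max T_1$ rather than merely $\# T_1$.

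For this I use that the tail map $\psi = \phi_{i_n}\circ\cdots\circ\phi_{i_{n-m+1}}$ always lies in the finite family $\mathcal{F}_m$ (of cardinality at most $k^m$), while $m$ itself depends only on $d_1,\dots,d_k,\epsilon$. For each $\psi \in \mathcal{F}_m$ and each of the finitely many possible maps $\beta\colon S' \to \Upsilon_\psi$ (the set defined by (5.3)), Lemma 4.2 produces a finite exceptional set $E(\psi,\beta) \subset K$. Setting
\[
E^* \ := \bigcup_{\psi \in \mathcal{F}_m}\ \bigcup_{\beta} E(\psi,\beta), \qquad H^* \ := \max_{x \in E^*} h(x),
\]
gives a finite set $E^*$ and a finite real number $H^*$, each depending only on $A, \mathcal{F}, K, S, \epsilon$. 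By the very construction of $T_1$, every $n \in T_1$ forces $\Phi^{n-m}(P) \in E^*$ and hence $h(\Phi^{n-m}(P)) \leq H^*$.

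Running the same Lemma 2.3 / Lemma 3.5 computation that handled $T_3$ in the proof of Theorem 5.2, now with $H^*$ playing the role of the height upper bound, then yields
\[
d_1^{n-m}\,\hat{h}_\Phi(P) \ \leq \ H^* + c_3 h(\mathcal{F}) + c_4
\]
for every $n \in T_1$, hence $n \leq m + \log_{d_1}^+\!\bigl((H^* + c_3 h(\mathcal{F}) + c_4)/\hat{h}_\Phi(P)\bigr)$. Writing $H^* + c_3 h(\mathcal{F}) + c_4 = C\bigl(\hat{h}_\mathcal{F}(A)+h(\mathcal{F})\bigr)$ with $C = C(A,\mathcal{F},K,S,\epsilon) > 0$ and applying the elementary inequality $\log_{d_1}^+(Cy) \leq \log_{d_1}^+ C + \log_{d_1}^+ y$ isolates the required $\log_{d_1}^+\!\bigl((\hat{h}_\mathcal{F}(A)+h(\mathcal{F}))/\hat{h}_\Phi(P)\bigr)$ term and leaves only the parameter-dependent constant $m + \log_{d_1}^+ C$, which is absorbed into $\gamma_2$. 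The main obstacle I anticipate is the bookkeeping to verify that $H^*$ really is finite with the advertised dependence, uniformly across the finite family $\mathcal{F}_m$ and across the finitely many choices of $\beta$; once this finite union is organised, the $T_1,T_2,T_3$ bounds combine into the stated inequality.
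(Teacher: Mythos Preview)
Your proposal is correct and reaches the same conclusion as the paper, but the route you take to bound $\max T_1$ is genuinely different from the paper's. The paper invokes an \emph{explicit} height bound on Roth-exceptional approximants, quoted from Gross's effective Roth theorem (reference~[2]): it cites an inequality of the shape
\[
\log(4H(x)) \ \leq\ \frac{4rn}{\eta}\Bigl(\tfrac{1}{\eta}\log(4H(\alpha)) + \log(4\max_i H(\beta_i))\Bigr),
\]
so that $h(x) \leq C\bigl(h(\alpha)+\max_i h(\beta_i)\bigr)$ for each exception $x$; plugging $x=\Phi^{n-m}(P)$ and $\alpha = A_v'$ gives $h(\Phi^{n-m}(P)) = O(\hat h_{\mathcal F}(A)+h(\mathcal F)) + \gamma$ with the auxiliary $\beta_i$-heights absorbed into a constant $\gamma$ depending on $A,\mathcal F,K,S,\epsilon$. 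You instead exploit only the \emph{finiteness} of the Roth-exceptional set from Lemma~4.2, together with the observation that the auxiliary data $(\psi,S',\beta)$ range over a finite set determined by $A,\mathcal F,K,S,\epsilon$: taking the union $E^*$ of all the exceptional sets and setting $H^*=\max_{x\in E^*}h(x)$ gives $h(\Phi^{n-m}(P))\leq H^*$ directly.

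Your argument is more elementary in that it avoids the appeal to Gross's quantitative refinement; the paper's version is in principle more explicit, since it isolates the part of the bound that varies linearly with $h(A_v')$ (and hence with $\hat h_{\mathcal F}(A)$) rather than burying everything in $H^*$. For the statement as phrased, where $\gamma_2$ is permitted to depend on $A,\mathcal F,K,S,\epsilon$, this extra precision is not needed and your finite-union argument suffices. One small cosmetic point: your last step, writing $H^*+c_3h(\mathcal F)+c_4 = C(\hat h_{\mathcal F}(A)+h(\mathcal F))$, tacitly assumes $\hat h_{\mathcal F}(A)+h(\mathcal F)>0$; it would be cleaner simply to note that $m+\log_{d_1}^+\!\bigl((H^*+c_3h(\mathcal F)+c_4)/\hat h_\Phi(P)\bigr)$ is already of the required form, since its numerator depends only on the allowed parameters.
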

\begin{proof}
 Due to the proof of Theorem 5.2, we only need to prove such bound for $\max T_1$.
 
 According to the proof of Theorem B from [2], page 3, inequality 6, for the algebraic numbers $x$ approximating $\alpha$ satisfying Roth's theorem hypothesis, there exists a finite number( depending on the constants given by Lemma 4.2) of $\beta_i$'s approximating $\alpha$ that  depend only on $\alpha$ and on the parameters of Lemma 4.2 such that 
 \begin{center}
  $\log(4H(x)) \leq \dfrac{4rn}{\eta}\left( \dfrac{1}{\eta} \log(4H(\alpha)) + \log (4 \max_i H(\beta_i)) \right)$,
 \end{center} where $r, n$ and $\eta$ depend only on $\# \Upsilon$ defined in (5.3).
 
 This implies that
$h(x) \leq C(h(\alpha) + \max_i h(\beta_i))$, where $C$ depends only on the parameters of Lemma 4.2.
 Translating this for the notation of our set $T_1$, as in the proof of Theorem 5.2, we have that 
 \begin{align*} 
 h(\Phi^{n-m}(P)) &\leq C(\max_{v \in S, \psi \in \mathcal{F}_m}h(A^{\prime}_v) + \max_{i,v, \psi}h(\beta_{i,v})) \\ &=O(\hat{h}_{\mathcal{F}}(A) + h(\mathcal{F}) + \max_{i,v, \psi}h(\beta_{i,v}))\\ &= O(\hat{h}_{\mathcal{F}}(A) + h(\mathcal{F})) + \gamma,
  \end{align*}
 for each $n \in T_1$,
  where $\gamma$ depends only on $A, \mathcal{F},K, S$ and $\epsilon$ by our previous choice of $m$.
 
 We have thus that
 \begin{align*}
  d_1^{n-m}\hat{h}_{\Phi}(P) & \leq \deg(\Phi^{n-m}) \hat{h}_{\Phi}(P)= \hat{h}_{S^{n-m}\Phi}(\Phi^{n-m}(P))\\ &=h(\Phi^{n-m}(P)) + O(1)\\ &\leq O(\hat{h}_{\mathcal{F}}(A) + h(\mathcal{F}))+ \gamma_1, 
  \end{align*}
  for each $n \in T_1$, where $\gamma_1$ depends only on $A, \mathcal{F},K, S$ and $\epsilon$.

 Therefore, \begin{center}
 
 $\max T_1 \leq m + \log_{d_1}^+ \left( \dfrac{O(\hat{h}_{\mathcal{F}}(A) + h(\mathcal{F}))+\gamma_1}{\hat{h}_{\Phi}(P)}  \right) \leq \gamma_2 + \log_{d_1}^+ \left( \dfrac{\hat{h}_{\mathcal{F}}(A) + h(\mathcal{F})}{\hat{h}_{\Phi}(P)} \right)$,
\end{center}where $\gamma_2(\{ \beta_{i,v})\}_{i,v, \psi})$ depends only on $A, \mathcal{F},K, S$ and $\epsilon$, concluding the proof.
\end{proof}
\begin{Cor}
 Let $S \subset M_K$ be a finite set of places that includes all archimedean places, let $R_S$ be the ring of $S$-integers of $K$, and let $2\leq d_1 \leq ... \leq d_k.$ There is an effective constant
 $\gamma= \gamma(d_1,...,d_k, [K:\mathbb{Q}])$ such that for all sets $\mathcal{F}= \{ \phi_1,..., \phi_k \} \subset K(z)$ of $k$ rational functions of respective degrees $d_1,...,d_k$ that are not totally ramified at the $\mathcal{F}$-orbit of $\infty$ or that the $\mathcal{F}$-orbit of $\infty$ has no repeated points, and for any sequence $\Phi$ of maps from $\mathcal{F}$ and all points $P \in \mathbb{P}^1(K)$ that are not prepriodic for $\Phi$,
 the number of $S$-integers in the $\Phi-$orbit of $P$ is bounded by
 \begin{center}
  $\# \{n \geq 1 | z(\Phi^n(P)) \in R_S \} \leq 4^{\#S}\gamma + \log^+_{d_1}\left(\dfrac{h(\mathcal{F})}{\hat{h}_{\Phi}(P)}\right)$.
 \end{center}

\end{Cor}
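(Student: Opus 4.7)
The plan is to derive this corollary from Theorem 5.2 applied to $A=\infty$ with $\varepsilon=1/2$, after translating the condition $z(\Phi^n(P))\in R_S$ into the defining inequality of $\Gamma_{\Phi,S}(\infty,P,1/2)$. The ramification/no-repetition hypothesis on the $\mathcal{F}$-orbit of $\infty$ in the corollary is precisely what Theorem 5.2 demands of its distinguished point $A$, so the theorem is directly applicable.

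The bridge between integrality and membership in $\Gamma$ is the pointwise inequality $\lambda_v(Q,\infty)\ge \log^+|z(Q)|_v$, which follows immediately by unpacking the definition of the chordal metric at $\infty=[1,0]$ both in the archimedean and non-archimedean cases. If $z(\Phi^n(P))\in R_S$, then the contributions at $v\notin S$ vanish and one obtains
\begin{equation*}
\sum_{v\in S} d_v\,\lambda_v(\Phi^n(P),\infty)\ \ge\ \sum_{v\in S} d_v\log^+|z(\Phi^n(P))|_v\ =\ h(\Phi^n(P)).
\end{equation*}
Combining this with Lemma 3.5(i) applied to the shifted sequence $S^n(\Phi)$ gives $h(\Phi^n(P))\ge \hat h_{S^n(\Phi)}(\Phi^n(P))-O(h(\mathcal F)+1)$ with constants depending only on the degrees. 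Therefore either $\hat h_{S^n(\Phi)}(\Phi^n(P))$ is large enough (larger than a constant multiple of $h(\mathcal F)$) that the inequality above upgrades to $\sum_{v\in S}d_v\lambda_v(\Phi^n(P),\infty)\ge \tfrac12 \hat h_{S^n(\Phi)}(\Phi^n(P))$, placing $n$ inside $\Gamma_{\Phi,S}(\infty,P,1/2)$, or the identity $\hat h_{S^n(\Phi)}(\Phi^n(P))=\deg(\Phi^n)\hat h_\Phi(P)\ge d_1^n\,\hat h_\Phi(P)$ forces $n\le \log_{d_1}^+(h(\mathcal F)/\hat h_\Phi(P))+O(1)$.

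It then remains to feed this into Theorem 5.2(a) and to replace the factor $\hat h_\mathcal{F}(\infty)+h(\mathcal F)$ appearing inside the $\log_{d_1}^+$ by $h(\mathcal F)$ alone. The bound $\hat h_\mathcal{F}(\infty)=O(h(\mathcal F))$ comes from Lemma 2.6 together with the standard estimate $c(g_j)=O(h(g_j))$ which follows from Lemma 3.4(a), so that $\hat h_\mathcal{F}(\infty)+h(\mathcal F)$ is absorbed into $h(\mathcal F)$ inside the $\log_{d_1}^+$. Merging the $4^{\#S}\gamma_3$ produced by Theorem 5.2(a), the $O(1)$ from the small-$\hat h$ regime, and the $O(h(\mathcal F))$ correction into one effective constant $\gamma=\gamma(d_1,\dots,d_k,[K:\mathbb Q])$ yields the claimed inequality. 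The main point requiring care is uniformity: every constant produced by Lemmas 2.6, 3.4(a), 3.5(i) and Theorem 5.2(a) must be verified to depend only on the degrees $d_1,\dots,d_k$ and $[K:\mathbb Q]$, never on the individual heights of the $\phi_j$; once this bookkeeping is tracked, the proof becomes a routine substitution into Theorem 5.2.
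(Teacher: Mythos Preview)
Your proposal is correct and follows essentially the same route as the paper's proof: both use the pointwise inequality $\lambda_v(Q,\infty)\ge \log^+|z(Q)|_v$ to convert $S$-integrality into the lower bound $\sum_{v\in S}d_v\lambda_v(\Phi^n(P),\infty)\ge h(\Phi^n(P))$, then split into the two cases according to whether $\deg(\Phi^n)\hat h_\Phi(P)$ dominates $h(\mathcal F)$, applying Theorem 5.2 with $A=\infty$ and $\varepsilon=1/2$ in the large case and bounding $n$ directly in the small case. Your justification that $\hat h_{\mathcal F}(\infty)=O(h(\mathcal F))$ via Lemma 2.6 and Lemma 3.4(a) is in fact more explicit than the paper's somewhat cryptic remark that ``$\hat h_\Phi(\infty)\ll h(\infty)=0$''.
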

\begin{proof}
An element $\alpha \in K$ is in $R_S$ if and only if $|\alpha|_v \leq 1$ for all $v \not\in S$, or equivalently, if and only if
\begin{center}
 $h(\alpha) = \sum_{v \in S}d_v \log \max \{ |\alpha|_v,1 \}$.
\end{center} Another fact is that
\begin{center}
 $\log \max \{ |\alpha|_v,1 \} \leq \lambda_v(\alpha, \infty)$.
\end{center} This implies for $\alpha \in R_S$ that $h(\alpha) \leq \sum_{v \in S} d_v \lambda_v(\alpha, \infty)$.

Let $ n \geq 1$ satisfy $z(\Phi^n(P)) \in R_S$. Then
\begin{center}
 $h(\Phi^n(P)) \leq \sum_{v \in S} d_v \lambda_v(\Phi^n(P), \infty)$.
\end{center}
Lemmas 3.5 and 2.3 tell us that
\begin{center}
 $h(\Phi^n(P)) \geq \hat{h}_{S^n(\Phi)}(\Phi^n(P)) - c_3h(\mathcal{F}) - c_4=\deg(\Phi^n)\hat{h}_{\Phi}(P) - c_3h(\mathcal{F}) - c_4$,
\end{center} which implies that
\begin{center}
 $\deg(\Phi^n)\hat{h}_{\Phi}(P) - c_3h(\mathcal{F}) - c_4 \leq \sum_{v \in S} d_v \lambda_v(\Phi^n(P), \infty)$.
\end{center}
The rest of  the proof is divided in two cases:
First one, when 
\begin{center}
 $\deg(\Phi^n)\hat{h}_{\Phi}(P) \leq 2c_3h(\mathcal{F}) + 2 c_4$.                
\end{center}In this case, $d_1^n \hat{h}_{\Phi}(P) \leq 2C_3h(\mathcal{F}) + 2 c_4$, and then
\begin{center}
 $n \leq \log_{d_1}^+ \left(  \dfrac{2c_3h(\mathcal{F}) + 2c_4}{\hat{h}_{\Phi}(P)}  \right)$.
\end{center} In the second case , $\deg(\Phi^n)\hat{h}_{\Phi}(P) \geq 2c_3h(\mathcal{F}) + 2 c_4$. Therefore
\begin{center}
 $\sum_{v \in S} d_v \lambda_v(\Phi^n(P), \infty) \geq \frac{1}{2}\deg(\Phi^n)\hat{h}_{\Phi}(P)=\frac{1}{2}\hat{h}_{S^n(\Phi)}(\Phi^n(P))$.
\end{center} Now the previous theorem with $\epsilon =1/2, A= \infty$ ($\infty$ is not totally ramified for any map of the system) tells us that $n$ is at most
\begin{center}
 $4^{\#S} \gamma_3 + \log^+_{d_1} \left( \dfrac{h(\mathcal{F}) + \hat{h}_{\Phi}(\infty)}{\hat{h}_{\Phi}(P)} \right)$,
\end{center} for $\gamma_3$ depending only on $[K: \mathbb{Q}], d_1,...,d_k$. Both bounds are on the desired form since $\hat{h}_{\Phi}(\infty) \ll h(\infty)=0$.
\end{proof}

\begin{Cor}
Under the conditions of Corollary 5.4, there is a constant
 $\gamma= \gamma(S,\mathcal{F}, [K:\mathbb{Q}])$ such that for all sets $\mathcal{F}= \{ \phi_1,..., \phi_k \} \subset K(z)$ of rational functions of respective degrees $d_1,...,d_k$ that are not totally ramified at the $\mathcal{F}$-orbit of $\infty$ or that the $\mathcal{F}$-orbit of $\infty$ has no repeated points, and all points $P \in \mathbb{P}^1(K)$ that are not preperiodic for any sequence $\Phi$ of terms in $\mathcal{F}$,
 the number of $S$-integers in the $\mathcal{F}-$orbit of $P$ is bounded by
 \begin{center}
  $\# \{Q \in \mathcal{O}_{\mathcal{F}}(P) | z(Q) \in R_S \} \leq \dfrac{k^M-1}{k-1}$, 
 \end{center} where \begin{center}$M=\left\lceil \gamma + \log^+_{d_1}\left(\dfrac{h(\mathcal{F})}{\hat{h}^{\text{min}}_{\mathcal{F},K}(P)}\right)\right\rceil+1$. \end{center}
\end{Cor}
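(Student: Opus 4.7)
The plan is to upgrade the per-sequence bound of Corollary 5.4 (really Proposition 5.3) to a depth bound that is uniform across all sequences $\Phi$ generated by $\mathcal{F}$, and then count the number of compositions of bounded length.

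First I would observe that, applying the proof of Corollary 5.4 with $A=\infty$ and $\varepsilon=1/2$ to each sequence $\Phi$ of maps from $\mathcal{F}$, every $n\geq 1$ with $z(\Phi^n(P))\in R_S$ satisfies either a ``small-$n$'' inequality or the integrality hypothesis of Theorem 5.2. Combining the two cases with the maximum-depth refinement of Proposition 5.3, and absorbing the quantity $\hat{h}_{\mathcal{F}}(\infty)=O(h(\mathcal{F}))$ (which holds by Lemma 3.5 together with $h(\infty)=0$) into the constant, one obtains
\begin{equation*}
n \;\leq\; \tilde{\gamma}\;+\;\log^{+}_{d_1}\!\left(\dfrac{h(\mathcal{F})}{\hat{h}_{\Phi}(P)}\right),
\end{equation*}
with $\tilde{\gamma}=\tilde{\gamma}(S,\mathcal{F},[K:\mathbb{Q}])$ independent of $\Phi$. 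The hypothesis on the $\mathcal{F}$-orbit of $\infty$ (not totally ramified, or injective) is exactly what is needed to invoke Theorem 5.2 at $A=\infty$.

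Next, I would replace $\hat{h}_{\Phi}(P)$ by the $\Phi$-independent quantity $\hat{h}^{\min}_{\mathcal{F},K}(P):=\inf_{\Phi}\hat{h}_{\Phi}(P)$. The argument closing the proof of Theorem 5.2 shows that this infimum is actually a positive minimum: by Lemma 3.5 every $\hat{h}_{\Phi}(P)$ lies in the interval $(0,\,\hat{h}_{\mathcal{F}}(P)+O(h(\mathcal{F}))]$, and Northcott's finiteness property for the ample canonical height $\hat{h}_{\mathcal{F}}$ forces only finitely many real values to appear there. Hence the bound above becomes uniform in $\Phi$:
\begin{equation*}
n \;\leq\; \tilde{\gamma}\;+\;\log^{+}_{d_1}\!\left(\dfrac{h(\mathcal{F})}{\hat{h}^{\min}_{\mathcal{F},K}(P)}\right) \;\leq\; M-1.
\end{equation*}

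Finally, every $S$-integer point $Q\in\mathcal{O}_{\mathcal{F}}(P)$ is of the form $\Phi_{w}^{(n)}(P)$ for some word $w\in W$ and some $0\leq n\leq M-1$. The number of distinct compositions $\phi_{i_n}\circ\cdots\circ\phi_{i_1}$ with $n\leq M-1$ is at most $\sum_{n=0}^{M-1}k^{n}=(k^{M}-1)/(k-1)$, which gives the desired bound on the cardinality. I expect the only real delicacy here is the positivity-of-minimum step for $\hat{h}^{\min}_{\mathcal{F},K}(P)$; once that is justified via the Northcott argument, Steps 1 and 3 are, respectively, a direct appeal to Corollary 5.4/Proposition 5.3 and a combinatorial count on words in $\{1,\ldots,k\}^{\leq M-1}$.
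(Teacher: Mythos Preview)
Your proposal is correct and follows essentially the same approach as the paper: apply Corollary~5.4 together with Proposition~5.3 to get a depth bound $n\leq \gamma+\log^{+}_{d_1}(h(\mathcal{F})/\hat{h}_{\Phi}(P))$, replace $\hat{h}_{\Phi}(P)$ by the $\Phi$-uniform minimum $\hat{h}^{\min}_{\mathcal{F},K}(P)$, and then count words of length at most $M-1$ via $\sum_{n=0}^{M-1}k^{n}=(k^{M}-1)/(k-1)$. Your write-up is in fact more explicit than the paper's about why the infimum is positive and about absorbing $\hat{h}_{\mathcal{F}}(\infty)$.
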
 
\begin{proof}
 If $Q \in \mathcal{O}_{\mathcal{F}}(P), z(Q) \in R_S$, then there exists a sequence $\Phi$ of maps from $\mathcal{F}$, and a $n \geq 1$, such that $Q = \Phi^n(P)$ and $z(\Phi^n(P)) \in R_S$. By Theorem 5.2, Corollary 5.4 and Proposition 5.3,
 there exists a suitable $\gamma$ such that \begin{center} $n \leq \gamma + \log^+_{d_1}\left(\dfrac{h(\mathcal{F})}{\hat{h}^{\text{min}}_{\mathcal{F},K}(P)}\right)$.
\end{center} And for each $m$, there are at most $k^m$ maps inside $\mathcal{F}_m$, and therefore at most $k^m$  $S$-integer points on the set $\{ f(P)| f \in \mathcal{F}_m \}$. The result follows from the identity
$1 + k +...+ k^n = \dfrac{k^{n+1}-1}{k-1}$.
 \end{proof}
 \begin{Rem}
  In the particular case of a system of polynomial maps, that are non-special (not monomial and neither Tchebychev's), the number of points for which some $\Phi$-orbit has repeated points if finite, due to Theorem 1.7 of [6], and therefore only for a finite number of points $A$ the hypothesis of Theorem 5.2 will not be satisfied.
 \end{Rem}
 
 \begin{Rem}
 Theorem 5.2 delivers, in particular, under its conditions for sequences $\Phi$ of rational functions in a given system over a certain number field and $P,A$ rational numbers, an explicit upper bound for
 \begin{center}
 $\# \{ n \geq 1 ; \dfrac{1}{\Phi^n(P) - A} $ is quasi-$(S, \epsilon)$-integral $ \}$,
  
 \end{center} and this does not depend on which $\Phi$ was chosen from the initial set $\mathcal{F}$.

 \end{Rem}
 
 \begin{Cor}
  Under the hypothesis of Theorem 5.2,
  \begin{center}
   $\displaystyle\lim_{n \rightarrow \infty} \dfrac{\lambda_v(\Phi^n(P),A)}{\deg (\Phi^n)}=0$ for every $v \in M_K$.
  \end{center}
\end{Cor}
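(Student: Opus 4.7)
The plan is to reduce the claim to a direct contradiction with Theorem 5.2 by specializing $S$ to a singleton. Since the chordal metric satisfies $\rho_v \leq 1$ on $\mathbb{P}^1(\mathbb{C}_v)$ (by Cauchy--Schwarz in the archimedean case and the ultrametric inequality in the nonarchimedean case), we have $\lambda_v \geq 0$, so the quantities whose limit we wish to compute are nonnegative. Hence it suffices to show that for each fixed $v \in M_K$ and each $\delta > 0$, the set
\[
E_{v,\delta} \;:=\; \{\,n \geq 0 : \lambda_v(\Phi^n(P), A) \geq \delta \deg(\Phi^n)\,\}
\]
is finite.

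The key algebraic ingredient is the scaling law obtained by iterating the functional equation in Lemma 2.3(ii), namely
\[
\hat{h}_{S^n(\Phi)}(\Phi^n(P)) \;=\; \deg(\Phi^n)\cdot \hat{h}_\Phi(P).
\]
Because $P$ is not $\Phi$-preperiodic, Lemma 2.3(iv) ensures $\hat{h}_\Phi(P) > 0$. Now fix $v$ and $\delta$, set $S := \{v\}$, and choose
\[
\varepsilon \;:=\; \min\bigl\{1,\; d_v\delta / \hat{h}_\Phi(P)\bigr\} \;>\; 0.
\]
For any $n \in E_{v,\delta}$ one then has
\[
\sum_{v' \in S} d_{v'} \lambda_{v'}(\Phi^n(P), A) \;=\; d_v \lambda_v(\Phi^n(P), A) \;\geq\; d_v \delta \deg(\Phi^n) \;\geq\; \varepsilon \deg(\Phi^n) \hat{h}_\Phi(P) \;=\; \varepsilon \hat{h}_{S^n(\Phi)}(\Phi^n(P)),
\]
so $n \in \Gamma_{\Phi,S}(A,P,\varepsilon)$.

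Since the hypotheses on $A$ in Theorem 5.2 coincide with those standing here, part (a) of that theorem provides an explicit finite bound on $\#\,\Gamma_{\Phi,S}(A,P,\varepsilon)$; in particular $E_{v,\delta}$ is finite, which is exactly what is needed. There is no serious obstacle: the proof is essentially a repackaging of the quantitative estimate already established. The only points worth checking are that Theorem 5.2 permits $S$ to be any finite set of places (in particular a singleton, whether archimedean or not) and that $\varepsilon$ is required to lie in $(0,1]$, which is precisely why we take the minimum with $1$.
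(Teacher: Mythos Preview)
Your proof is correct and follows essentially the same approach as the paper: both arguments specialize Theorem~5.2 to the singleton set $S=\{v\}$ and use the identity $\hat{h}_{S^n(\Phi)}(\Phi^n(P))=\deg(\Phi^n)\hat{h}_\Phi(P)$ to translate finiteness of $\Gamma_{\Phi,\{v\}}(A,P,\varepsilon)$ into the vanishing of the limit. Your version is slightly more explicit in checking the hypotheses (nonnegativity of $\lambda_v$, the constraint $\varepsilon\le 1$), but the underlying idea is the same.
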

\begin{proof}
 Applying theorem 5.2 for the set of places that contains just the place $v$, we conclude that for every natural $n$ big enough, it will be true that 
 \begin{center}
  $\dfrac{\lambda_v(\Phi^n(P))}{\deg (\Phi^n)} \leq \epsilon \dfrac{\hat{h}_{\Phi}(P)}{d_v}$.
 \end{center} Choosing $\epsilon$ sufficiently small, the result is proven.

\end{proof} Note that, due to Theorem 5.2, the convergence above has an uniformity for the semigroup of maps, in the sense that the big natural $n$ does not depend on the $\Phi$ chosen in the semigroup generated by the initial dynamical system, so that actually the stronger fact
\begin{center}
 $\displaystyle\lim_{n \rightarrow \infty} \left( \sup_{\Phi \text{ seq. of } \mathcal{F}}\dfrac{\lambda_v(\Phi^n(P),A)}{\deg (\Phi^n)} \right)=0$ for every $v \in M_K$
\end{center} is also true.
\begin{Cor}
 Suppose that a set $\mathcal= \{ \phi_1,..., \phi_k \} \subset \mathbb{Q}(z)$ of rational functions of degree at least $2$ satisfies the hypothesis of Theorem 5.2 with $P= \alpha \in \mathbb{Q}$, $A= 0$ and $A= \infty$, and let $\Phi$ be a sequence of functions of $\mathcal{F}$ such that $\# \mathcal{O}_{\Phi}(\alpha)= \infty$. Write
 \begin{center}
  $\Phi^n(\alpha)= \dfrac{a_n(\alpha)}{b_n(\alpha)} \in \mathbb{Q}$ as a fraction in lowest terms.
 \end{center} Then
 \begin{center}
  $\displaystyle\lim_{n \rightarrow \infty} \dfrac{\log |a_n(\alpha)|}{\log |b_n(\alpha)|}=1$.
 \end{center}
\end{Cor}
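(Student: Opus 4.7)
The plan is to apply Corollary 5.8 at the unique archimedean place $v$ of $\mathbb{Q}$ twice---once with $A = \infty$ and once with $A = 0$---and to rewrite the resulting local heights in terms of $\log|a_n|$ and $\log|b_n|$. Since $a_n$ and $b_n$ are coprime integers, at every finite prime $p$ we have $\max\{|a_n|_p, |b_n|_p\} = 1$, so over $\mathbb{Q}$ the global height collapses to
\begin{equation*}
h(\Phi^n(\alpha)) = \log\max\{|a_n|, |b_n|\}.
\end{equation*}
A direct computation from the definition of the chordal metric at $v$, using the projective representatives $\Phi^n(\alpha) = [a_n,b_n]$, $\infty = [1,0]$, $0 = [0,1]$, gives
\begin{align*}
\lambda_v(\Phi^n(\alpha), \infty) &= \log\max\{|a_n|, |b_n|\} - \log|b_n| + O(1), \\
\lambda_v(\Phi^n(\alpha), 0) &= \log\max\{|a_n|, |b_n|\} - \log|a_n| + O(1),
\end{align*}
where the $O(1)$ term is bounded by $\tfrac{1}{2}\log 2$ and just compares $\sqrt{a_n^2+b_n^2}$ with $\max\{|a_n|,|b_n|\}$.

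Next I would control the absolute growth of $h(\Phi^n(\alpha))$. Because $\#\mathcal{O}_{\Phi}(\alpha) = \infty$, Lemma 2.3(iv) gives $\hat h_\Phi(\alpha) > 0$; combining Lemma 2.3(ii) with Lemma 3.5 yields $h(\Phi^n(\alpha)) = \deg(\Phi^n)\,\hat h_\Phi(\alpha) + O(h(\mathcal{F}) + 1)$, so
\begin{equation*}
\frac{\log\max\{|a_n|,|b_n|\}}{\deg(\Phi^n)} \longrightarrow \hat h_\Phi(\alpha) > 0.
\end{equation*}
Then I would invoke Corollary 5.8 with $S = \{v\}$, first at $A = \infty$ and then at $A = 0$ (both admissible by the joint hypothesis imported from Theorem 5.2), to obtain $\lambda_v(\Phi^n(\alpha), \infty)/\deg(\Phi^n) \to 0$ and $\lambda_v(\Phi^n(\alpha), 0)/\deg(\Phi^n) \to 0$. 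Substituting into the two local-height identities above, both $\log|a_n|/\deg(\Phi^n)$ and $\log|b_n|/\deg(\Phi^n)$ converge to the same positive number $\hat h_\Phi(\alpha)$, and dividing gives $\log|a_n|/\log|b_n| \to 1$.

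The main point to check is not really technical: it is that Corollary 5.8 is applicable with the single place $v$ for \emph{both} choices $A = 0$ and $A = \infty$, which is precisely the ramification/orbit condition that the statement assumes. A minor bookkeeping remark is that $\log|a_n|$ would be $-\infty$ in the degenerate case $\Phi^n(\alpha) = 0$; but the computation above forces $\log|a_n|/\deg(\Phi^n) \to \hat h_\Phi(\alpha) > 0$, so in fact $a_n$ (and symmetrically $b_n$) is nonzero with $|a_n| \to \infty$ for all sufficiently large $n$, making the ratio well-defined from some index onward.
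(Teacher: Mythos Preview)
Your proof is correct and follows essentially the same route as the paper: invoke Corollary~5.8 at the archimedean place for $A=0$ and $A=\infty$, rewrite $\lambda_v(\Phi^n(\alpha),\infty)$ and $\lambda_v(\Phi^n(\alpha),0)$ in terms of $\log|a_n|$, $\log|b_n|$ and $h(\Phi^n(\alpha))=\log\max\{|a_n|,|b_n|\}$, and then divide. The only cosmetic difference is that you normalize throughout by $\deg(\Phi^n)$ while the paper passes to normalization by $h(\Phi^n(\alpha))$; since $h(\Phi^n(\alpha))/\deg(\Phi^n)\to\hat h_\Phi(\alpha)>0$ these are equivalent.
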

\begin{proof}
 From previous corollary, for $v$ the place at infinity, it is true that
 \begin{center}
  $\displaystyle\lim_{n \rightarrow \infty} \dfrac{\lambda_v(\Phi^n(\alpha),0)}{\deg (\Phi^n)}=\displaystyle\lim_{n \rightarrow \infty} \dfrac{\lambda_v(\Phi^n(\alpha),\infty)}{\deg (\Phi^n)}=0$.
 \end{center} Working out similarly as in the proof of previous Corollary, using Lemma 3.4 (i), it is true that 
\begin{center}
 $\displaystyle\lim_{n \rightarrow \infty} \dfrac{\lambda_v(\Phi^n(\alpha),0)}{ h(\Phi^n(\alpha))}=\displaystyle\lim_{n \rightarrow \infty} \dfrac{\lambda_v(\Phi^n(\alpha),\infty)}{h(\Phi^n(\alpha))}=0$.
\end{center} 
On the other hand, if $t= \dfrac{a}{b} \in \mathbb{Q}$ written in lowest terms, since $\max \{ |a|, |b| \} \leq \sqrt{|a|^2 + |b|^2}$, then $h(t)= \log \max \{ |a|,|b| \}$ and
\begin{align*}\lambda_v(t, \infty) &= \lambda_v([a,b],[1,0])= \log \left( \dfrac{\sqrt{|a|^2 + |b|^2}}{|b|} \right)= -\log|b| + \log(\sqrt{|a|^2 + |b|^2})\\ & \geq -\log |b| + h(t).
 \end{align*} And in the same way
 \begin{align*}\lambda_v(t, 0) &= \lambda_v([a,b],[0,1])= \log \left( \dfrac{\sqrt{|a|^2 + |b|^2}}{|a|} \right)= -\log|a| + \log(\sqrt{|a|^2 + |b|^2})\\ & \geq -\log |a| + h(t).
 \end{align*}
 Gathering these facts, and recalling that $\Phi^n(\alpha)= \dfrac{a_n(\alpha)}{b_n(\alpha)}$  yields
 \begin{center}
  $\displaystyle\lim_{n \rightarrow \infty} \dfrac{-\log|b_n(\alpha)| + h(\Phi^n(\alpha))}{ h(\Phi^n(\alpha))}=\displaystyle\lim_{n \rightarrow \infty} \dfrac{-\log|a_n(\alpha)| + h(\Phi^n(\alpha))}{h(\Phi^n(\alpha))}=0$,
 \end{center} and thus
 
\begin{center}
  $\displaystyle\lim_{n \rightarrow \infty} \dfrac{\log|b_n(\alpha)|}{ \log \max \{ |a_n(\alpha)|, |B_n(\alpha)| \}}=\displaystyle\lim_{n \rightarrow \infty} \dfrac{\log|a_n(\alpha)|}{\log \max \{ |a_n(\alpha)|, |B_n(\alpha)| \}}=1$.
 \end{center} This implies that 
 \begin{center}
 $\displaystyle\lim_{n \rightarrow \infty} \dfrac{\log |a_n(\alpha)|}{\log |b_n(\alpha)|}=1$.

\end{center}

\end{proof}
 \begin{Rem}Again, from Theorem 5.2, for a given $\alpha \in \mathbb{Q}$, the last result does not depend on $\Phi$, in the sense that for every sequence $\Phi$ of functions in the tree of functions  belonging the initial set $\mathcal{F}$, the correspondent quotient sequences $\dfrac{\log |a_n(\alpha)|}{\log |b_n(\alpha)|}$ converge to $1$ as $n$ goes to $\infty$ with the same speed. 
\end{Rem} The $\Phi$-uniformity pointed in Remark 5.10 results in the following semigroup integrality result.
\begin{Cor}
 Suppose that a set $\mathcal= \{ \phi_1,..., \phi_k \} \subset \mathbb{Q}(z)$ of rational functions of degree at least $2$ satisfies the hypothesis of Theorem 5.2 with $P= \alpha \in \mathbb{Q}$, $A= 0$ and $A= \infty$, and that $\# \mathcal{O}_{\Phi}(\alpha)= \infty$ for each sequence $\Phi$ of functions of $\mathcal{F}$. Suppose also that for some $M >0$, $\Phi^n(\alpha) \neq 0$ for any $n >M$ and any $\Phi$ sequence of maps in $\mathcal{F}$ Write
 \begin{center}
  $f(\alpha)= \dfrac{a_f(\alpha)}{b_f(\alpha)} \in \mathbb{Q}$ as a fraction in lowest terms
 \end{center} for each $f$ in the semigroup generated by $\mathcal{F}$.
 
 Then
 \begin{center}
  $\displaystyle\lim_{n \rightarrow \infty} \dfrac{1}{k^n}\left\{\displaystyle\sum_{f \in \mathcal{F}_n} \dfrac{\log |a_f(\alpha)|}{\log |b_f(\alpha)|}\right\}=1$.
 \end{center}
\end{Cor}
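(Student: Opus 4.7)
The plan is to derive this from the pointwise convergence $\log|a_n(\alpha)|/\log|b_n(\alpha)| \to 1$ established in Corollary 5.9, combined with the uniformity of that convergence across all sequences $\Phi$ of maps in $\mathcal{F}$, which is precisely the content of Remark 5.10 and is rooted in Theorem 5.2(b). First, I would identify $\mathcal{F}_n$, with the natural multiplicity coming from the word-parametrization, with the set of $k^n$ tuples $\mathbf{i} = (i_1,\dots,i_n) \in \{1,\dots,k\}^n$ via $f_{\mathbf{i}} = \phi_{i_n} \circ \cdots \circ \phi_{i_1}$; any infinite tail extension $\Phi^{\mathbf{i}}$ of $\mathbf{i}$ produces the same $n$-th composition and lets us apply Corollary 5.9 to the individual summand.

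Next, I would extract uniformity in $\mathbf{i}$. Tracing the proof of Corollary 5.9, the speed of convergence at step $n$ is controlled by the two quantities $\lambda_v(\Phi^n(\alpha),0)/h(\Phi^n(\alpha))$ and $\lambda_v(\Phi^n(\alpha),\infty)/h(\Phi^n(\alpha))$, with $v$ the archimedean place. By Theorem 5.2(b), applied separately with $A = 0$ and $A = \infty$, both quantities fall below any prescribed $\varepsilon > 0$ for all $n \geq N_0$, where $N_0 = N_0(\mathcal{F},K,\alpha,\varepsilon)$ is independent of the infinite sequence $\Phi$ chosen from $\mathcal{F}$. Hence for every $\mathbf{i}$ and every $n \geq N_0$,
$$\left|\frac{\log|a_{f_{\mathbf{i}}}(\alpha)|}{\log|b_{f_{\mathbf{i}}}(\alpha)|} - 1\right| < \varepsilon.$$

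To finish, I would deal with the degenerate words, namely those $\mathbf{i}$ of length $n$ with $b_{f_{\mathbf{i}}}(\alpha) = \pm 1$, i.e.\ $f_{\mathbf{i}}(\alpha) \in \mathbb{Z}$. Corollary 5.5 applied with $S = \{v_\infty\}$ shows that the set of integer points in $\mathcal{O}_{\mathcal{F}}(\alpha)$ is finite; a short argument (using that each such target has only finitely many preimages under each $\phi_j$, together with the hypothesis $\Phi^n(\alpha) \neq 0$ for $n > M$) bounds the number of such anomalous words of length $n$ by $o(k^n)$, so assigning those undefined ratios the value $1$ does not affect the limit. Averaging $k^n$ summands each within $\varepsilon$ of $1$, off an $o(k^n)$ set, yields a quantity within $\varepsilon + o(1)$ of $1$; letting $\varepsilon \to 0$ concludes the proof. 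The one delicate step is the genuine $\Phi$-independence of $N_0$, which is exactly what Theorem 5.2(b) and Remark 5.10 provide; once this is in hand the averaging step is routine.
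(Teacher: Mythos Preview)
Your approach is essentially the paper's: invoke the $\Phi$-uniform convergence of Corollary~5.9 (via Remark~5.10 / Theorem~5.2(b)) and then average over the $k^n$ words in $\mathcal{F}_n$. Your detour through ``degenerate words'' is unnecessary and the sketched $o(k^n)$ argument is shaky as stated; the same uniformity (Theorem~5.2(b) with $A=\infty$, as in the proof of Corollary~5.4) already produces a single $N_1$ beyond which $\Phi^n(\alpha)\notin\mathbb{Z}$ for every $\Phi$, so for $n>N_1$ there are no anomalous words at all and every ratio is well defined.
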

\begin{proof}
 By Corollary 5.9 with its notation, for any $\varepsilon>0$ we have that \begin{center} $1- \varepsilon \leq \dfrac{\log |a_n(\alpha)|}{\log |b_n(\alpha)|} \leq 1+ \varepsilon$ \end{center} for $n$ sufficiently large, and uniformly for $\Phi$. For such  numbers $n$, this implies that
 \begin{center}
  $1- \varepsilon \leq \dfrac{1}{k^n}\left\{\displaystyle\sum_{f \in \mathcal{F}_n} \dfrac{\log |a_f(\alpha)|}{\log |b_f(\alpha)|}\right\} \leq 1+ \varepsilon$,
 \end{center} from where the result follows, since $\varepsilon$ is arbitrary.

\end{proof}

\textbf{Acknowledgements}: I would like to especially thank Alina Ostafe, John Roberts and Igor Shparlinski for encouraging me to this think in this problem, and 
I am very grateful for the much helpful suggestions of Igor Shparlinski and Alina Ostafe, and every clarifying conversation that we had about this research.
I am very thankful to ARC Discovery Grant and UNSW for supporting me in this research.

\end{document}